\makeatletter \@namedef{subjclassname@2010}{
  \textup{2020} Mathematics Subject Classification}
\newtheorem{thm}{Theorem}[section]
\newtheorem{cor}[thm]{Corollary}
\newtheorem{lem}[thm]{Lemma}
\newtheorem{pro}[thm]{Proposition}
\theoremstyle{remark}
\newtheorem*{rema}{Remark}
\theoremstyle{definition}
\newtheorem{exa}[thm]{\textbf{Example}}
\newcommand{\ran}{\text{\rm{ran}}}
\newcommand{\R}{\mathbb{R}}
\newcommand{\N}{\mathbb{N}}
\newcommand{\C}{\mathbb{C}}
\begin{document}

\title[Fuglede-Putnam theorem]{Unbounded generalizations of the Fuglede-Putnam theorem and applications to the commutativity of self-adjoint operators}
\author[S. Dehimi, M. H. Mortad and A. Bachir]{Souheyb Dehimi, Mohammed Hichem Mortad$^*$ and Ahmed Bachir}

\thanks{* Corresponding author.}
\date{}
\keywords{Normal operator; Closed operator; Symmetric operators;
Self-adjoint operators; positive operator; subnormal and hyponormal
operators; Fuglede-Putnam theorem; Hilbert space; Commutativity}

\subjclass[2010]{Primary 47B25. Secondary 47B15, 47A08.}

\address{(The first author) Department of Mathematics, Faculty of Mathematics and Informatics,
University of Mohamed El Bachir El Ibrahimi, Bordj Bou Arréridj,
El-Anasser 34030, Algeria.}

\email{souheyb.dehimi@univ-bba.dz, sohayb20091@gmail.com}

\address{(The corresponding author) Department of
Mathematics, University of Oran 1, Ahmed Ben Bella, B.P. 1524, El
Menouar, Oran 31000, Algeria.}

\email{mhmortad@gmail.com, mortad.hichem@univ-oran1.dz.}

\address{(The third author) Department of Mathematics, College of Science, King Khalid University, Abha, Saudi
Arabia.}

\email{abishr@kku.edu.sa, bachir\_ahmed@hotmail.com}

\begin{abstract}In this article, we prove and disprove several generalizations of unbounded versions of the Fuglede-Putnam
theorem. As applications, we give conditions guaranteeing the
commutativity of a bounded self-adjoint operator with an unbounded
closed symmetric operator.
\end{abstract}

\maketitle

\section{Essential background}

All operators considered here are linear but not necessarily
bounded. If an operator is bounded and everywhere defined, then it
belongs to $B(H)$ which is the algebra of all bounded linear
operators on $H$ (see \cite{Mortad-Oper-TH-BOOK-WSPC} for its
fundamental properties).

Most unbounded operators that we encounter are defined on a subspace
(called domain) of a Hilbert space. If the domain is dense, then we
say that the operator is densely defined. In such case, the adjoint
exists and is unique.

Let us recall a few basic definitions about non-necessarily bounded
operators. If $S$ and $T$ are two linear operators with domains
$D(S)$ and $D(T)$ respectively, then $T$ is said to be an extension
of $S$, written as $S\subset T$, if $D(S)\subset D(T)$ and $S$ and
$T$ coincide on $D(S)$.

An operator $T$ is called closed if its graph is closed in $H\oplus
H$. It is called closable if it has a closed extension. The smallest
closed extension of it is called its closure and it is denoted by
$\overline{T}$ (a standard result states that a densely defined $T$
is closable iff $T^*$ has a dense domain, and in which case
$\overline{T}=T^{**}$). If $T$ is closable, then
\[S\subset T\Rightarrow \overline{S}\subset
\overline{T}.\] If $T$ is densely defined, we say that $T$ is
self-adjoint when $T=T^*$; symmetric if $T\subset T^*$; normal if
$T$ is \textit{closed} and $TT^*=T^*T$.

The product $ST$ and the sum $S+T$ of two operators $S$ and $T$ are
defined in the usual fashion on the natural domains:

\[D(ST)=\{x\in D(T):~Tx\in D(S)\}\]
and
\[D(S+T)=D(S)\cap D(T).\]

In the event that $S$, $T$ and $ST$ are densely defined, then
\[T^*S^*\subset (ST)^*,\]
with the equality occurring when $S\in B(H)$. If $S+T$ is densely
defined, then
\[S^*+T^*\subset (S+T)^*\]
with the equality occurring when $S\in B(H)$.

Let $T$ be a linear operator (possibly unbounded) with domain $D(T)$
and let $B\in B(H)$. Say that $B$ commutes with $T$ if
\[BT\subset TB.\]
In other words, this means that $D(T)\subset D(TB)$ and
\[BTx=TBx,~\forall x\in D(T).\]

Let $A$ be an injective operator (not necessarily bounded) from
$D(A)$ into $H$. Then $A^{-1}: \ran(A)\rightarrow H$ is called the
inverse of $A$, with $D(A^{-1})=\ran(A)$.

If the inverse of an unbounded operator is bounded and everywhere
defined (e.g. if $A:D(A)\to H$ is closed and bijective), then $A$ is
said to be boundedly invertible. In other words, such is the case if
there is a $B\in B(H)$ such that
\[AB=I\text{ and } BA\subset I.\]
If $A$ is boundedly invertible, then it is closed.

The resolvent set of $A$, denoted by $\rho(A)$, is defined by
\[\rho(A)=\{\lambda\in\C:~\lambda I-A\text{ is bijective and }(\lambda I-A)^{-1}\in B(H)\}.\]

The complement of $\rho(A)$, denoted by $\sigma(A)$,
\[\sigma(A)=\C\setminus \rho(A)\]
is called the spectrum of $A$.

Recall also that the product of two closed operators need not be
closed (see \cite{Mortad-cex-BOOK}). However, and it is known (among
other results), that $TS$ is closed if $T$ is closed and $S\in B(H)$
or if $T^{-1}$ is in $B(H)$ and $S$ is closed.

If a symmetric operator $T$ is such that $\langle Tx,x\rangle\geq0$
for all $x\in D(T)$, then we say that $T$ is positive, and we write
$T\geq0$. When $T$ is self-adjoint and $T\geq0$, then we can define
its unique positive self-adjoint square root, which we denote by
$\sqrt T$.

If $T$ is densely defined and closed, then $T^*T$ (and $TT^*$) is
self-adjoint and positive (a celebrated result due to von-Neumann,
see e.g. \cite{SCHMUDG-book-2012}). So, when $T$ is closed then
$T^*T$ is self-adjoint and positive whereby it is legitimate to
define its square root. The unique positive self-adjoint square root
of $T^*T$ is denoted by $|T|$. It is customary to call it the
absolute value or modulus of $T$. If $T$ is closed, then (see e.g.
Lemma 7.1 in \cite{SCHMUDG-book-2012})
\[D(T)=D(|T|)\text{ and } \|Tx\|=\||T|x\|,~\forall x\in D(T).\]

Next, we recall some definitions of unbounded non-normal operators.
A densely defined operator $A$ with domain $D(A)$ is called
hyponormal if
\[D(A)\subset D(A^*)\text{ and } \|A^*x\|\leq\|Ax\|,~\forall x\in D(A).\]

A densely defined linear operator $A$ with domain $D(A)\subset H$,
is said to be subnormal when there are a Hilbert space $K$ with
$H\subset K$, and a normal operator $N$ with $D(N)\subset K$ such
that
\[D(A)\subset D(N)\text{ and } Ax=Nx \text{ for all } x\in D(A).\]

In the end, we recall some basic facts about matrices of
non-necessarily bounded operators. Let $H$ and $K$ be two Hilbert
spaces and let $A:H\oplus K\to H\oplus K$ (we may also use $H\times
K$ instead of $H\oplus K$) be defined by
\[
A=\left(
      \begin{array}{cc}
        A_{11} & A_{12} \\
        A_{21} & A_{22} \\
      \end{array}
    \right)
\]
where $A_{11}\in L(H)$, $A_{12}\in L(K,H)$, $A_{21}\in L(H,K)$ and
$A_{22}\in L(K)$ are not necessarily bounded operators. If $A_{ij}$
has a domain $D(A_{ij})$ with $i,j=1,2$, then
\[D(A)=(D(A_{11})\cap D(A_{21}))\times (D(A_{12})\cap D(A_{22}))\]
is the natural domain of $A$. So if $(x_1,x_2)\in D(A)$, then
\[A\left(
     \begin{array}{c}
       x_1 \\
       x_2 \\
     \end{array}
   \right)=\left(
             \begin{array}{c}
               A_{11}x_1+A_{12}x_2\\
               A_{21}x_1+A_{22}x_2 \\
             \end{array}
           \right).
\]

Also, recall that the adjoint of $\left(
      \begin{array}{cc}
        A_{11} & A_{12} \\
        A_{21} & A_{22} \\
      \end{array}
    \right)$ is not always $\left(
      \begin{array}{cc}
        A^*_{11} & A^*_{21} \\
        A^*_{12} & A^*_{22} \\
      \end{array}
    \right)$ (even when all domains are dense including the main domain $D(A)$) as
    known counterexamples show. Nonetheless, e.g.
    \[\left(
        \begin{array}{cc}
          A & 0 \\
          0 & B \\
        \end{array}
      \right)^*=\left(
        \begin{array}{cc}
          A^* & 0 \\
          0 & B^* \\
        \end{array}
      \right)\text{ and } \left(
                            \begin{array}{cc}
                              0 & C \\
                              D & 0 \\
                            \end{array}
                          \right)^*=\left(
                            \begin{array}{cc}
                              0 & D^* \\
                              C^* & 0 \\
                            \end{array}
                          \right)
    \]
if $A$, $B$, $C$ and $D$ are all densely defined.

\section{Introduction}

The aim of this paper is twofold. In the first part, we obtain some
generalizations of the Fuglede-Putnam theorem involving unbounded
operators. In the second part, we apply the Fuglede-Putnam theorem
to obtain conditions guaranteeing the commutativity of self-adjoint
operators, one of them is bounded.

Recall that the original version of the Fuglede-Putnam theorem
reads:

\begin{thm}\label{Fug-Put UNBD A BD}(\cite{FUG}, \cite{PUT}) If $A\in B(H)$ and if $M$
and $N$ are normal (non necessarily bounded) operators, then
\[AN\subset MA\Longrightarrow AN^*\subset M^*A.\]
\end{thm}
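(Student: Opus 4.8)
The plan is to reduce the two-operator statement to the single-operator case (Fuglede's theorem) by the usual $2\times 2$ operator-matrix device, and then to run Rosenblum's entire-function argument, carrying out the domain bookkeeping that the possible unboundedness of $M$ and $N$ makes necessary. For the reduction, work on $H\oplus H$ and put
\[
\mathcal N=\left(\begin{array}{cc}M&0\\0&N\end{array}\right),\qquad \mathcal A=\left(\begin{array}{cc}0&A\\0&0\end{array}\right).
\]
Then $\mathcal N$ is normal (each diagonal block is normal), $\mathcal A\in B(H\oplus H)$, and by the rule for the adjoint of a diagonal operator matrix recalled above, $\mathcal N^{*}$ is the diagonal matrix with entries $M^{*}$ and $N^{*}$. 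A term-by-term comparison of the natural domains shows that $AN\subset MA$ is equivalent to $\mathcal A\mathcal N\subset\mathcal N\mathcal A$, and that $\mathcal A\mathcal N^{*}\subset\mathcal N^{*}\mathcal A$ is equivalent to $AN^{*}\subset M^{*}A$. Hence it suffices to prove the single-operator version: \emph{if $B\in B(H)$ and $T$ is normal with $BT\subset TB$, then $BT^{*}\subset T^{*}B$.}

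To prove this, let $E$ be the spectral measure of $T$, let $P_{k}=E(\{\lambda:|\lambda|\le k\})$, and set $\mathcal D=\mathrm{span}\bigcup_{k}\ran P_{k}$. This is a dense subspace, invariant under $T$ and $T^{*}$ and a core for each of them, and every $x\in\mathcal D$ is an entire analytic vector for $T$ and for $T^{*}$ (note $\|T^{*j}x\|=\|T^{j}x\|$); for such $x$ the series $\sum_{j}\mu^{j}T^{j}x/j!$ converges for all $\mu\in\C$ to the vector $e^{\mu T}x$ furnished by the Borel functional calculus, and similarly with $T^{*}$. From $BT\subset TB$ one deduces $BT^{j}\subset T^{j}B$ for every $j$, whence $\|T^{j}Bx\|=\|BT^{j}x\|\le\|B\|\,\|T^{j}x\|$; thus $Bx$ is again an entire analytic vector for $T$, and $Be^{\mu T}x=e^{\mu T}Bx$ for all $x\in\mathcal D$, $\mu\in\C$.

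Now fix $x,y\in\mathcal D$ and set $\psi(\lambda)=\langle Be^{-\lambda T^{*}}x,\,e^{\bar\lambda T}y\rangle$ for $\lambda\in\C$. Writing $e^{-\lambda T^{*}}x$ and $e^{\bar\lambda T}y$ as their absolutely convergent power series shows $\psi$ is entire in $\lambda$. To bound it, observe that, $T$ being normal, $e^{\lambda T^{*}}$ and $e^{-\bar\lambda T}$ are commuting functions of $T$ and that, on analytic vectors, $e^{\lambda T^{*}}e^{-\bar\lambda T}=e^{\lambda T^{*}-\bar\lambda T}$ and $e^{\bar\lambda T}e^{-\lambda T^{*}}=e^{\bar\lambda T-\lambda T^{*}}$, where the exponents $\lambda T^{*}-\bar\lambda T$ and $\bar\lambda T-\lambda T^{*}$ are $i$ times self-adjoint operators (their spectral symbols $\lambda\bar\mu-\bar\lambda\mu$ and $\bar\lambda\mu-\lambda\bar\mu$ being purely imaginary). \emph{Hence these exponentials are unitary, and this is precisely where the classical argument is used: $e^{iS}$ is unitary for every self-adjoint $S$, bounded or not, so the scheme survives the passage to unbounded operators.} Inserting $B=e^{-\bar\lambda T}Be^{\bar\lambda T}$ (legitimate on $\mathcal D$ by the previous paragraph) yields, for $x\in\mathcal D$,
\[
e^{\lambda T^{*}}Be^{-\lambda T^{*}}x=e^{\lambda T^{*}-\bar\lambda T}\,B\,e^{\bar\lambda T-\lambda T^{*}}x,
\]
so $\|e^{\lambda T^{*}}Be^{-\lambda T^{*}}x\|\le\|B\|\,\|x\|$; since $e^{\bar\lambda T}=(e^{\lambda T^{*}})^{*}$ we may also write $\psi(\lambda)=\langle e^{\lambda T^{*}}Be^{-\lambda T^{*}}x,\,y\rangle$, whence $|\psi(\lambda)|\le\|B\|\,\|x\|\,\|y\|$. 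By Liouville, $\psi\equiv\psi(0)=\langle Bx,y\rangle$, and as $y$ ranges over the dense set $\mathcal D$ this forces $Be^{-\lambda T^{*}}x=e^{-\lambda T^{*}}Bx$ for all $x\in\mathcal D$, $\lambda\in\C$. Comparing the coefficients of $\lambda$ in the two power series gives $BT^{*}x=T^{*}Bx$ for $x\in\mathcal D$; since $\mathcal D$ is a core for $T^{*}$, $T^{*}$ is closed, and $B$ is bounded, a routine approximation upgrades this to $BT^{*}\subset T^{*}B$, and undoing the reduction yields $AN^{*}\subset M^{*}A$.

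The only genuinely delicate part is the domain bookkeeping: one must check that $\mathcal D$ is invariant under all of the (in general unbounded) exponentials $e^{\mu T}$ and $e^{\mu T^{*}}$, that $B$ intertwines the $e^{\mu T}$ on $\mathcal D$, that the displayed identity is valid as an equality of \emph{vectors} on $\mathcal D$, and that $\psi$ is truly entire and bounded. I expect this verification — rather than any new idea — to be the main obstacle; once it is in place, Rosenblum's proof carries over essentially verbatim.
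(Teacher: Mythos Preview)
The paper does not give its own proof of this statement: Theorem~\ref{Fug-Put UNBD A BD} is quoted as the classical Fuglede--Putnam theorem, with references to \cite{FUG} and \cite{PUT}, and is used thereafter as a known tool. So there is no in-paper argument to compare your proposal against.

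That said, your outline is the standard route and is sound. The $2\times 2$ reduction (Berberian's trick) correctly collapses the Putnam statement to the Fuglede case, and on $H\oplus H$ the diagonal operator $\mathcal N$ is indeed normal with the expected adjoint, as the paper itself records for block-diagonal matrices. For the Fuglede half, Rosenblum's Liouville argument via $e^{\lambda T^*}Be^{-\lambda T^*}$ is the right idea; the passage to unbounded $T$ via the bounded vectors $\mathcal D=\mathrm{span}\bigcup_k\ran P_k$ is exactly how one makes the power series honest, and the key fact that $e^{\lambda T^*-\bar\lambda T}$ is unitary survives unboundedness through the spectral theorem, as you note. The points you should actually pin down (and you correctly flag them) are: (i) each $\ran P_k$ is invariant under $T$, $T^*$, and all the exponentials $e^{\mu T}$, $e^{\mu T^*}$, so $\mathcal D$ really is invariant; (ii) from $BT\subset TB$ one gets $BT^j\subset T^jB$ by induction, hence $Bx$ is an entire analytic vector for $T$ \emph{and}, by normality ($D(T^j)=D(T^{*j})$, $\|T^jv\|=\|T^{*j}v\|$), also for $T^*$ --- this is what makes the substitution $B=e^{-\bar\lambda T}Be^{\bar\lambda T}$ and the subsequent application of $e^{\lambda T^*}$ legitimate as equalities of vectors; and (iii) the power-series definition of $e^{\lambda T^*-\bar\lambda T}$ on analytic vectors agrees with the spectral-calculus unitary, so the norm bound $\|e^{\lambda T^*}Be^{-\lambda T^*}x\|\le\|B\|\,\|x\|$ is genuine. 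Once those checks are written out, the argument is complete; there is no missing idea.
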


There have been many generalizations of the Fuglede-Putnam theorem
since Fuglede's paper. However, most generalizations were devoted to
relaxing the normality assumption. Apparently, the first
generalization of the Fuglede theorem to an unbounded $A$ was
established in \cite{Nussbaum-1969}. Then the first generalization
involving unbounded operators of the Fuglede-\textit{Putnam} theorem
is:

\begin{thm}\label{Fug-Put-MORTAD-PAMS-2003} If $A$ is a closed and symmetric operator and if $N$ is an unbounded normal operator, then
\[AN\subset N^*A\Longrightarrow AN^*\subset NA\]
whenever $D(N)\subset D(A)$.
\end{thm}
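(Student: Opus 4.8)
The plan is to reduce the unbounded statement to the classical bounded Fuglede--Putnam theorem (Theorem~\ref{Fug-Put UNBD A BD}) by a resolvent trick, exploiting the fact that a symmetric operator $A$, while not self-adjoint, still satisfies $A\subset A^*$. First I would observe that since $N$ is normal, $i\in\rho(N)$ and $-i\in\rho(N^*)$, so $(N-i)^{-1}$ and $(N^*+i)^{-1}$ both belong to $B(H)$; moreover $(N-i)^{-1}$ is itself normal with $[(N-i)^{-1}]^*=(N^*+i)^{-1}$. The hypothesis $AN\subset N^*A$ together with $D(N)\subset D(A)$ should be massaged into an inclusion of the form $A(N-i)^{-1}\subset (N^*+i)^{-1}A$, or better, into an everywhere-defined bounded intertwining relation after composing with resolvents on both sides. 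The key point is that $A$ is \emph{closed}, so products like $A(N-i)^{-1}$ are closed operators, and being everywhere defined (because $\ran[(N-i)^{-1}]=D(N)\subset D(A)$) they are bounded by the closed graph theorem.

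Concretely, from $AN\subset N^*A$ one gets, for $x\in H$, that $(N-i)^{-1}x\in D(N)\subset D(A)$ and $AN(N-i)^{-1}x = N^*A(N-i)^{-1}x$; writing $N(N-i)^{-1}=I+i(N-i)^{-1}$ and rearranging, I would aim to show $A(N-i)^{-1}=(N^*-i)^{-1}A$ on $D(A)$, i.e. the bounded operator $B:=A(N-i)^{-1}\in B(H)$ satisfies $B(N-i)\subset (N-i)A$ — equivalently an intertwining $B$ conjugates a normal operator. Actually the cleanest route: set $B=(N^*+i)^{-1}A(N-i)$ or simply check that $T:=A\,(N-i)^{-1}$ lies in $B(H)$ and satisfies $T\,\widetilde N = \widetilde M\, T$ for suitable \emph{bounded normal} operators $\widetilde N,\widetilde M$ built from the resolvents of $N$ and $N^*$; then the bounded Fuglede--Putnam theorem yields $T\,\widetilde N^* = \widetilde M^*\,T$, which upon translating resolvents back gives $AN^*\subset NA$. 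Here is where symmetry enters decisively: to pass from the adjoint relation back to an inclusion involving $A$ rather than $A^*$, I would use $A\subset A^*$ to absorb an $A^*$ that appears, and $D(N)\subset D(A)$ to guarantee the domains line up.

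The main obstacle I anticipate is \textbf{bookkeeping of domains and adjoints}, not any deep idea: unbounded products do not associate freely, $(ST)^*\neq T^*S^*$ in general, and one must be careful that the resolvent identities used (e.g. $N(N-i)^{-1}=(N-i)^{-1}N$ on $D(N)$, and $[(N-i)^{-1}]^*=(N^*+i)^{-1}$) are applied only where all terms are defined. In particular, justifying that $B=A(N-i)^{-1}$ is genuinely in $B(H)$ requires knowing $A$ closed and $\ran[(N-i)^{-1}]=D(N)\subseteq D(A)$ so that the closed graph theorem applies — this is exactly why the hypothesis $D(N)\subset D(A)$ is needed and cannot be dropped. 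Once the problem is bounded, the classical theorem does all the real work; the final step of unwinding $T\widetilde N^*=\widetilde M^*T$ into $AN^*\subset NA$ should be a routine (if fiddly) verification using the symmetry $A\subset A^*$ once more to handle the side on which $A$ must sit.
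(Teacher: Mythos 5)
Your argument breaks down at its very first step: for an unbounded \emph{normal} operator $N$ there is no reason why $i\in\rho(N)$; in fact $\rho(N)$ can be empty. The spectrum of an unbounded normal operator is an arbitrary nonempty closed subset of $\C$ and may equal all of $\C$ (take $N$ to be multiplication by the variable $z$ on $L^2(\C)$). Having $\pm i$ as resolvent points is a privilege of self-adjoint operators, and in Theorem \ref{Fug-Put-MORTAD-PAMS-2003} it is $A$, not $N$, that is symmetric. Since every subsequent object in your plan --- $T=A(N-i)^{-1}$ and the bounded normal operators $\widetilde N,\widetilde M$ built from resolvents of $N$ and $N^*$ --- presupposes these resolvents, the reduction to the classical theorem never gets off the ground; this is a genuine gap, not bookkeeping. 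Two further points would need repair even if one assumed $\rho(N)\neq\emptyset$: the identity $AN(N-\lambda)^{-1}x=N^*A(N-\lambda)^{-1}x$ requires $(N-\lambda)^{-1}x\in D(AN)$, i.e. $x+\lambda(N-\lambda)^{-1}x\in D(A)$, which holds only for $x\in D(A)$, so you obtain an intertwining relation on a dense set with unbounded expressions on both sides rather than an identity of bounded operators; and the closing plan to ``absorb an $A^*$ using $A\subset A^*$'' points the wrong way, since $A\subset A^*$ lets you replace $A$ by $A^*$, whereas the desired conclusion $AN^*\subset NA$ is a statement about $A$ itself.

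For comparison: the paper does not prove this theorem at all; it quotes it from \cite{MHM1} (see also \cite{Mortad-Thesis-Edinburgh-2003}), merely observing that the proof given there uses only the closedness and symmetry of $A$. Your underlying instinct --- use $D(N)\subset D(A)$, the closedness of $A$ and the closed graph theorem to manufacture a bounded intertwiner, then invoke Theorem \ref{Fug-Put UNBD A BD} --- is reasonable, but the bounded auxiliaries must be built from objects that always exist for a normal $N$, for instance its spectral projections $E_n=E(\{z:|z|\leq n\})$ or $(I+|N|)^{-1}$: these commute with $N$, map $H$ into $D(N)\subset D(A)$, so that $T_n=AE_n\in B(H)$ and $T_nN\subset N^*T_n$, whence Theorem \ref{Fug-Put UNBD A BD} gives $T_nN^*\subset NT_n$; one must then pass to the limit $n\to\infty$, a step which itself requires care (closedness of $A$ and $N$ and the relative bound $\|Ax\|\leq c(\|x\|+\|Nx\|)$ on $D(N)$ coming from the closed graph theorem). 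As written, however, the proposal does not prove the statement.
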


In fact, the previous result was established in \cite{MHM1} under
the assumption of the self-adjointness of $A$. However, and by
scrutinizing the proof in \cite{MHM1} or
\cite{Mortad-Thesis-Edinburgh-2003}, it is seen that only the
closedness and the symmetricity of $A$ were needed. Other unbounded
generalizations may be consulted in
\cite{Mortad-Fuglede-Putnham-All-unbd} and
\cite{Bens-MORTAD-FUGLEDE-DEHIMI} and some of the references
therein. In the end, readers may wish to consult the survey
\cite{Mortad-FUG-PUT SURVEY BOOK} exclusively devoted to the
Fuglede-Putnam theorem and its applications.

In the second part of this manuscript, we continue the
investigations initiated in the thesis
\cite{Mortad-Thesis-Edinburgh-2003}, and then in
\cite{Gustafson-Mortad-II} inter alia. More precisely, we show that
if $B\in B(H)$ is self-adjoint and $A$ is densely defined, closed
and symmetric, then $BA\subset AB$ given that $AB$ or $BA$ is e.g.
normal.

\section{Generalizations of the Fuglede-Putnam theorem}

If a densely defined operator $N$ is normal, then so is its adjoint.
However, if $N^*$ is normal, then $N^{**}$ does not have to be
normal (unless $N$ itself is closed). A simple counterexample is to
take the identity operator $I_D$ restricted to some unclosed dense
domain $D\subset H$. Then $I_D$ cannot be normal for it is not
closed. But, $(I_D)^*=I$ which is the full identity on the entire
$H$, is obviously normal. Notice in the end that if $N$ is a densely
defined closable operator, then $N^*$ is normal if and only if
$\overline{N}$ is.

The first improvement is that in the very first version by B.
Fuglede, the normality of the operator is not needed as only the
normality of its closure will do. This observation has already
appeared in \cite{Boucif-Dehimi-Mortad}, but we reproduce the proof
here.

\begin{thm}\label{fuglede-type closure normal THM}
Let $B\in B(H)$ and let $A$ be a densely defined and closable
operator such that $\overline{A}$ is normal. If $BA\subset AB$, then
\[BA^*\subset A^*B.\]
\end{thm}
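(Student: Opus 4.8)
The plan is to reduce the statement to the classical Fuglede--Putnam theorem (Theorem \ref{Fug-Put UNBD A BD}) applied to the closure $\overline{A}$. First I would observe that the hypothesis $BA\subset AB$ passes to the closure: since $B\in B(H)$, the operator $AB$ is closable whenever $A$ is (indeed $\overline{AB}\subset \overline{A}\,B$), and from $BA\subset AB$ one gets, after taking closures and using that $B$ is everywhere defined and bounded, $B\overline{A}\subset \overline{A}\,B$. The cleanest way to see this: if $x\in D(\overline{A})$, pick $x_n\in D(A)$ with $x_n\to x$ and $Ax_n\to \overline{A}x$; then $Bx_n\to Bx$ and $ABx_n=BAx_n\to B\overline{A}x$, so by closedness of $\overline{A}$ we get $Bx\in D(\overline{A})$ and $\overline{A}Bx=B\overline{A}x$, i.e. $B\overline{A}\subset \overline{A}B$.

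Next, since $\overline{A}$ is normal by hypothesis, I would apply the classical Fuglede--Putnam theorem (Theorem \ref{Fug-Put UNBD A BD}) with $A:=B$, $M:=N:=\overline{A}$ and the intertwining relation $B\overline{A}\subset \overline{A}B$ just established. This yields $B(\overline{A})^*\subset (\overline{A})^*B$. Finally I would translate this back to $A^*$: because $A$ is densely defined and closable, $A^*=(\overline{A})^*$ (a standard fact, since $A^*=A^*\overline{\phantom{A}}{}$ is already closed and $\overline{A}=A^{**}$ gives $(\overline{A})^*=A^{***}=A^*$). Substituting, $BA^*\subset A^*B$, which is exactly the claim.

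The only genuinely delicate point is the first step — verifying that $BA\subset AB$ really does imply $B\overline{A}\subset \overline{A}B$ — because in the unbounded setting inclusions do not automatically survive closure of products, and one must use boundedness and total definedness of $B$ in an essential way (the argument above via approximating sequences does exactly this). Everything after that is a direct invocation of Theorem \ref{Fug-Put UNBD A BD} together with the elementary identity $A^*=(\overline{A})^*$ for closable densely defined $A$, so I expect no further obstacle. It is worth noting that this is genuinely weaker as a hypothesis than assuming $A$ itself normal, since $A$ here need not even be closed.
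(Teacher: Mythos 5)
Your proof is correct, but your route to the classical Fuglede theorem differs from the paper's. You transfer the hypothesis to the closure directly: a sequential argument (using that $B$ is bounded and everywhere defined, and that $\overline{A}$ is closed) upgrades $BA\subset AB$ to $B\overline{A}\subset \overline{A}B$, after which one application of Theorem \ref{Fug-Put UNBD A BD} to the normal operator $\overline{A}$ with intertwiner $B$, together with the identity $(\overline{A})^*=A^*$, finishes the job. The paper instead argues purely algebraically with adjoints: from $BA\subset AB$ it gets $B^*A^*\subset (AB)^*\subset (BA)^*=A^*B^*$, applies the classical Fuglede theorem to the normal operator $A^*=\overline{A}^*$ with intertwiner $B^*$ to obtain $B^*\overline{A}\subset \overline{A}B^*$, and then takes adjoints once more to land on $BA^*\subset A^*B$. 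The trade-off: your approach needs the (easy but essential) approximation step and in return produces the intermediate relation $B\overline{A}\subset \overline{A}B$, which is of independent interest and keeps the original $B$ throughout; the paper's approach avoids any limiting argument at the cost of two adjoint passages and the slightly delicate inclusion $B^*A^*\subset (AB)^*$ versus the equality $(BA)^*=A^*B^*$ (valid because the bounded factor sits on the correct side). Both arguments are sound and of comparable length.
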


\begin{proof}
Since $\overline{A}$ is normal, $\overline{A}^*=A^*$ remains normal.
Now,
\begin{align*}
BA\subset AB\Longrightarrow& B^*A^*\subset A^*B^* \text{ (by taking adjoints)}\\
\Longrightarrow &B^*\overline{A}\subset \overline{A}B^* \text{ (by using the classical Fuglede theorem)}\\
\Longrightarrow& BA^*\subset A^*B \text{ (by taking adjoints
again),}
\end{align*}
establishing the result.
\end{proof}

\begin{rema}
Notice that $BA^*\subset A^*B$ does not yield $BA\subset AB$ even in
the event of the normality of $A^*$ (see \cite{Mortad-cex-BOOK}).
\end{rema}

Let us now turn to the extension of the Fuglede-Putnam version. A
similar argument to the above one could be applied.

\begin{thm}\label{fuglede-Putnam type closure normal THM}
Let $B\in B(H)$ and let $N,M$ be densely defined closable operators
such that $\overline{N}$ and $\overline{M}$ are normal. If
$BN\subset MB$, then
\[BN^*\subset M^*B.\]
\end{thm}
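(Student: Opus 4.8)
The plan is to mimic the proof of Theorem~\ref{fuglede-type closure normal THM} almost verbatim, replacing the single normal operator by the pair $M,N$ and invoking the classical (bounded-$A$) Fuglede--Putnam theorem, Theorem~\ref{Fug-Put UNBD A BD}, in place of Fuglede's theorem. First I would record that since $\overline{N}$ and $\overline{M}$ are normal, their adjoints $\overline{N}^{\,*}=N^*$ and $\overline{M}^{\,*}=M^*$ are normal as well, so all the operators that will appear after dualizing are genuinely normal. This is the only place where closability of $N$ and $M$ is actually used.

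Next I would take adjoints in the hypothesis. From $BN\subset MB$ with $B\in B(H)$, and using that $B$ is everywhere defined and bounded (so that $(MB)^*=B^*M^*$ and $B^*N^*\subset(BN)^*$), I get $B^*M^*\subset N^*B^*$. Now $N^*$ and $M^*$ are normal and $B^*\in B(H)$, so the classical Fuglede--Putnam theorem (Theorem~\ref{Fug-Put UNBD A BD}) applies to the inclusion $B^*M^*\subset N^*B^*$ and yields $B^*M^{**}\subset N^{**}B^*$, i.e. $B^*\overline{M}\subset\overline{N}B^*$ since $M,N$ are closable and hence $M^{**}=\overline{M}$, $N^{**}=\overline{N}$. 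Taking adjoints once more, and again exploiting $B\in B(H)$ to turn the inclusion $B^*\overline{M}\subset\overline{N}B^*$ into $BN^*=B(\overline{N})^*\subset(\overline{M})^*B=M^*B$, delivers exactly the claimed conclusion $BN^*\subset M^*B$.

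I would lay this out as a short three-line \texttt{align*} display, each line annotated with the operation performed: ``(taking adjoints)'', ``(by the classical Fuglede--Putnam theorem)'', ``(taking adjoints again)''. The only subtlety worth a sentence of justification is the bookkeeping of which adjoint identities are equalities versus mere inclusions: the equalities $(MB)^*=B^*M^*$ and $(NB)^*=B^*N^*$ hold because $B\in B(H)$, and one must make sure the intermediate operators ($MB$, $N^*B^*$, etc.) are densely defined so that the adjoints exist — this follows since $B$ is everywhere defined and $M,N,M^*,N^*$ are densely defined. The step I expect to be the only potential obstacle is precisely this domain/adjoint bookkeeping when passing back and forth; the analytic content is entirely carried by the bounded Fuglede--Putnam theorem, which does all the real work, so there is no genuine difficulty beyond the algebra of unbounded adjoints, handled exactly as in the proof of Theorem~\ref{fuglede-type closure normal THM}.
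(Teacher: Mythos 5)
Your proposal is correct and takes essentially the same route as the paper: both arguments consist solely of taking adjoints and invoking the classical Fuglede--Putnam theorem (Theorem \ref{Fug-Put UNBD A BD}), the only difference being that you apply it to the normal adjoints $N^*,M^*$ and then dualize back, while the paper takes adjoints twice to reach $B\overline{N}\subset\overline{M}B$ and applies it to $\overline{N},\overline{M}$ at the end. One bookkeeping remark: the identity $(MB)^*=B^*M^*$ you cite need not hold when only $B\in B(H)$ (for equality the bounded factor must be the left one), but this is harmless since all you need --- and have --- is the chain $B^*M^*\subset(MB)^*\subset(BN)^*=N^*B^*$, the last equality holding precisely because $B\in B(H)$.
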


\begin{proof} Since $BN\subset MB$, it ensues that $B^*M^*\subset
N^*B^*$. Taking adjoints again gives $B\overline{N}\subset
\overline{M}B$. Now, apply the Fuglede-Putnam theorem to the normal
$\overline{N}$ and $\overline{M}$ to get the desired conclusion
\[BN^*\subset M^*B.\]
\end{proof}

Jab{\l}o\'{n}ski et al. obtained in \cite{Jablonski et al 2014} the
following version.

\begin{thm}\label{jablonsky et al FUG type THM}
If $N$ is a normal (bounded) operator and if $A$ is a closed densely
defined operator with $\sigma(A)\neq \C$, then:
\[NA\subset AN\Longrightarrow g(N)A\subset Ag(N)\]
for any bounded complex Borel function $g$ on $\sigma(N)$. In
particular, we have $N^*A\subset AN^*$.
\end{thm}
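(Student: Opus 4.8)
The plan is to transfer the problem, via the resolvent, to the \emph{classical} Fuglede theorem for the bounded operator $A^{-1}$, and then push the resulting commutation relations back up to $A$. Since $\sigma(A)\neq\C$, I would first fix $\lambda_0\in\rho(A)$ and set $A_0:=A-\lambda_0 I$. Because $N\in B(H)$ commutes with scalar multiples of $I$, one checks (keeping track of the natural domains on both sides) that $NA\subset AN$ if and only if $NA_0\subset A_0 N$, and likewise $g(N)A\subset Ag(N)$ if and only if $g(N)A_0\subset A_0 g(N)$. Hence, after renaming, we may assume $0\in\rho(A)$, so that $A^{-1}\in B(H)$, $AA^{-1}=I$ and $A^{-1}A\subset I$.

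The next step is to upgrade the inclusion $NA\subset AN$ to the honest bounded identity $NA^{-1}=A^{-1}N$. Indeed, given $y\in H$, write $y=Ax$ with $x=A^{-1}y\in D(A)$; then $NA\subset AN$ forces $Nx\in D(A)$ and $NAx=ANx$, whence $A^{-1}Ny=A^{-1}(NAx)=A^{-1}(ANx)=Nx=NA^{-1}y$. Now $A^{-1}\in B(H)$ and $N$ is bounded and normal, so the classical Fuglede theorem (Theorem~\ref{Fug-Put UNBD A BD} with $M=N$) gives $N^*A^{-1}=A^{-1}N^*$; consequently $A^{-1}$ commutes with the spectral measure $E_N$ of $N$, and therefore $g(N)A^{-1}=A^{-1}g(N)$ for every bounded Borel function $g$ on $\sigma(N)$, since $g(N)=\int_{\sigma(N)}g\,dE_N$.

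Finally I would reverse the computation of the previous paragraph. Putting $S:=g(N)\in B(H)$, so that $SA^{-1}=A^{-1}S$, for $x\in D(A)$ we get $Sx=SA^{-1}(Ax)=A^{-1}(SAx)\in\ran(A^{-1})=D(A)$, and applying $A$ yields $ASx=SAx$; thus $g(N)A\subset Ag(N)$. Undoing the shift $A=A_0+\lambda_0 I$ of the first step returns the conclusion $g(N)A\subset Ag(N)$ for the original $A$, and taking $g(z)=\overline z$ — a bounded Borel function on the compact set $\sigma(N)$ — produces the advertised special case $N^*A\subset AN^*$.

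The one point that demands care, rather than depth, is the domain bookkeeping: checking that the translation $A\mapsto A-\lambda_0 I$ preserves the operator inclusions in both directions, and that the passages $NA\subset AN\Leftrightarrow NA^{-1}=A^{-1}N$ and $SA^{-1}=A^{-1}S\Rightarrow SA\subset AS$ are valid with precisely the right domains. The only genuinely substantial ingredient — promoting $A^{-1}N=NA^{-1}$ to $A^{-1}g(N)=g(N)A^{-1}$ — is exactly what Fuglede's theorem and the Borel functional calculus hand us.
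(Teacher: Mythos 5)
Your proof is correct. Note, though, that the paper does not prove this statement at all: it is quoted from Jab{\l}o\'{n}ski, Jung and Stochel \cite{Jablonski et al 2014}. Your resolvent reduction is nevertheless essentially the same mechanism the paper itself uses to prove its unbounded generalization, Theorem \ref{THM JAblonsky's et al generalization UNB}: fix $\lambda_0\in\rho(A)$, convert $NA\subset AN$ into commutation of $N$ with the bounded operator $(A-\lambda_0I)^{-1}$, apply a Fuglede theorem at the bounded level, and translate back to $A$. The differences are of scope rather than substance: in the paper's generalization $N$ is unbounded (only $\overline{N}$ normal), so one only gets the inclusion $(A-\lambda I)^{-1}N\subset N(A-\lambda I)^{-1}$ and only the conclusion for $N^*$, whereas with $N\in B(H)$ you rightly upgrade to the genuine equality $NA^{-1}=A^{-1}N$, invoke the classical Fuglede theorem, and conclude that $A^{-1}$ commutes with the spectral measure of $N$; this is what delivers the full functional-calculus statement $g(N)A\subset Ag(N)$, with $g(z)=\overline{z}$ giving $N^*A\subset AN^*$. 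Your domain bookkeeping is accurate throughout: the shift $A\mapsto A-\lambda_0I$ preserves the inclusions in both directions because $N$ and $g(N)$ are everywhere defined, the passage from $NA\subset AN$ to $NA^{-1}=A^{-1}N$ uses exactly $AA^{-1}=I$ and $A^{-1}A\subset I$, and the reverse passage from $SA^{-1}=A^{-1}S$ to $SA\subset AS$ correctly lands $Sx$ in $\ran(A^{-1})=D(A)$ for $x\in D(A)$.
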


\begin{rema}
It is worth noticing that B. Fuglede obtained, long ago, in
\cite{FUG-1954-cexp-proper extension} a unitary $U\in B(H)$ and a
closed and symmetric $T$ with domain $D(T)\subset H$ such that
$UT\subset TU$ but $U^*T\not\subset TU^*$.
\end{rema}

Next, we give a generalization of Theorem \ref{jablonsky et al FUG
type THM} to an unbounded $N$, and as above, only the normality of
$\overline{N}$ is needed.

\begin{thm}\label{THM JAblonsky's et al generalization UNB}
Let $p$ be a one variable complex polynomial. If $N$ is a densely
defined closable operator such that $\overline{N}$ is normal and if
$A$ is a densely defined operator with $\sigma[p(A)]\neq \C$, then
\[NA\subset AN\Longrightarrow N^*A\subset AN^*\]
whenever $D(A)\subset D(N)$.
\end{thm}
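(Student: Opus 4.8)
The plan is to mimic the adjoint-chasing argument used in the proofs of Theorems \ref{fuglede-type closure normal THM} and \ref{fuglede-Putnam type closure normal THM}, but now feeding the hypothesis through Theorem \ref{jablonsky et al FUG type THM} (in its polynomial form $g(z)=\bar z$) rather than through the classical Fuglede theorem, and taking care that the spectral hypothesis needed by Theorem \ref{jablonsky et al FUG type THM} is met.

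First I would reduce to the bounded case for $N$. Since $\overline{N}$ is normal, $\overline{N}^*=N^*$ is normal as well, and because $D(A)\subset D(N)\subset D(\overline{N})$ the inclusion $NA\subset AN$ upgrades to $\overline{N}\,A\subset A\overline{N}$ (the right-hand side is unchanged since $N\subset\overline N$, and on $D(A)$ we have $NAx=ANx$ with $Ax\in D(N)\subset D(\overline N)$). So without loss of generality we may assume $N$ itself is normal and everywhere defined --- but wait, $\overline N$ need not be bounded, so this is the genuinely unbounded Fuglede situation. The correct route is therefore: take adjoints in $NA\subset AN$. Here $A$ is only densely defined; the product $NA$ with $N$ closable and $D(A)\subset D(N)$ is densely defined (its domain contains $D(A)$), and $AN\supset NA$ is then densely defined too, so adjoints make sense. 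Taking adjoints in $\overline N A\subset A\overline N$ gives $(A\overline N)^*\subset (\overline N A)^*$, and using the standard inclusions for adjoints of products together with $A^{*}\overline N^{*}\subset (\overline N A)^{*}$, one extracts $A^{*}\overline{N}^{*}\subset \overline{N}^{*}A^{*}$, i.e. $A^*N^*\subset N^*A^*$ after rewriting $\overline N^*=N^*$.

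Next I would bring in the polynomial hypothesis. We now have the normal (closed, densely defined) operator $\overline N$ and the operator $A^{*}$, with $\overline N^{*}A^{*}\subset A^{*}\overline N^{*}$ in hand if we instead start from the other adjoint computation — concretely, from $NA\subset AN$ I want to land on a relation of the shape $\overline{N}\,C\subset C\overline{N}$ for a \emph{closed} operator $C$ with $\sigma(p(C))\neq\mathbb C$, so that Theorem \ref{jablonsky et al FUG type THM} applies to $C$ and the bounded Borel function $g(z)=\bar z$ on $\sigma(\overline N)$, yielding $\overline N^{*}C\subset C\overline N^{*}$. The natural choice is $C=A^{*}$: it is closed (adjoints are always closed), it is densely defined because $A$ is closable here — hmm, $A$ need only be densely defined, not closable, but $A^{*}$ is automatically closed, and its density would follow if $A$ is closable; if $A$ is not assumed closable one works instead with $\overline{A}=A^{**}$, noting $A\subset\overline A$ and $\sigma(p(\overline A))=\sigma(p(A))\neq\mathbb C$. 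Then from $NA\subset AN$ one gets $N\overline A\subset \overline A N$ by closing, hence $\overline N\,\overline A\subset\overline A\,\overline N$, and Theorem \ref{jablonsky et al FUG type THM} applied to the normal $\overline N$ and the closed $\overline A$ with $\sigma(p(\overline A))\neq\mathbb C$ gives $\overline N^{*}\,\overline A\subset\overline A\,\overline N^{*}$, that is $N^{*}\overline A\subset\overline A N^{*}$; restricting to $D(A)$ and using $\overline A=A$ on $D(A)$ together with $D(A)\subset D(N)=D(\overline N)\supset D(\overline N^{*})$-considerations yields $N^{*}A\subset AN^{*}$.

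I expect the main obstacle to be exactly the domain bookkeeping in the last step: one must check that $\overline N^{*}\overline A\subset\overline A\,\overline N^{*}$ \emph{restricts} correctly to give $N^{*}A\subset AN^{*}$ on the original domains, i.e. that for $x\in D(A)$ one has $x\in D(N^{*})$ and $N^{*}x\in D(A)$ with $N^{*}Ax=AN^{*}x$. Since $D(A)\subset D(N)=D(N^{*})$ (normality of $\overline N$ gives $D(\overline N)=D(\overline N^{*})$, and $D(N)\subset D(\overline N)$ — here one needs $D(N)=D(\overline N)$, which holds when $N$ is already closed; in general one only gets the statement with $N$ replaced by $\overline N$, so the cleanest formulation may require $N$ closed, or one accepts $\overline N^{*}A\subset A\overline N^{*}$ as the conclusion, which coincides with $N^{*}A\subset AN^{*}$ since $\overline N^{*}=N^{*}$). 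Apart from that, the only subtlety is justifying $\sigma(p(\overline A))=\sigma(p(A))$ — or simply observing $p(A)\subset p(\overline A)$ suffices, since Theorem \ref{jablonsky et al FUG type THM} needs $\sigma$ of the \emph{closed} operator $\overline A$ to be a proper subset of $\mathbb C$, and $\sigma(p(\overline A))\subseteq\sigma(p(A))\cup\{\text{something}\}$ must be argued, or the hypothesis $\sigma[p(A)]\neq\mathbb C$ should be read as a statement about $\overline{p(A)}$ from the outset. These are the points I would be careful about; the algebraic skeleton (two adjoint-taking steps sandwiching one application of Theorem \ref{jablonsky et al FUG type THM}) is otherwise routine.
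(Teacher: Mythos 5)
Your proposal has a genuine gap, and in fact two of its load-bearing steps fail. First, the central move --- applying Theorem \ref{jablonsky et al FUG type THM} to the pair $(\overline{N},\overline{A})$ --- is not available: that theorem assumes $N$ is a \emph{bounded} normal operator, and its conclusion is about bounded Borel functions $g$ on $\sigma(N)$; with $\overline{N}$ unbounded the function $g(z)=\bar z$ is not even bounded on $\sigma(\overline N)$. Using it for unbounded $\overline N$ is essentially assuming the very statement you are asked to prove, so the argument is circular. Second, the reductions you lean on are unjustified: $NA\subset AN$ does not ``upgrade'' to $\overline N A\subset A\overline N$, nor does it give $N\overline A\subset \overline A N$ ``by closing'' --- the closure of a product is not the product of closures, and for $x\in D(A)$ with $Ax\in D(\overline N)\setminus D(N)$ the hypothesis says nothing. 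Likewise, from $\overline N^*A^*\subset (\overline N A)^*$ and $(A\overline N)^*\subset(\overline N A)^*$ you cannot ``extract'' $A^*\overline N^*\subset \overline N^*A^*$, since $(\overline N A)^*\subset A^*\overline N^*$ fails in general when neither factor is bounded. You also never really use the hypothesis $\sigma[p(A)]\neq\C$, beyond worrying whether it transfers to $\overline A$.

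The missing idea is the resolvent trick, which is how the paper proceeds. From $\sigma[p(A)]\neq\C$ one factors $p(A)-\lambda I=(A-\mu_1I)\cdots(A-\mu_nI)$ (as in \cite{Dehimi-Mortad-squares-polynomials}) to conclude $\sigma(A)\neq\C$; in particular $A$ is automatically closed (no passage to $\overline A$ is needed) and $\rho(A)\neq\emptyset$. Picking $\lambda\in\rho(A)$ and using $D(A)\subset D(N)$ to write $NA-\lambda N=N(A-\lambda I)$, the relation $N(A-\lambda I)\subset(A-\lambda I)N$ converts into $(A-\lambda I)^{-1}N\subset N(A-\lambda I)^{-1}$, where now the intertwiner $(A-\lambda I)^{-1}$ is in $B(H)$. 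This is exactly the situation covered by Theorem \ref{fuglede-type closure normal THM} (bounded intertwiner, closable $N$ with normal closure), which yields $(A-\lambda I)^{-1}N^*\subset N^*(A-\lambda I)^{-1}$, hence $N^*A-\lambda N^*\subset AN^*-\lambda N^*$. The final cancellation of $\lambda N^*$ is then a domain check: $D(N^*A)\subset D(A)\subset D(N)\subset D(\overline N)=D(N^*)$ (normality of $\overline N$) and $D(AN^*)\subset D(N^*)$, giving $N^*A\subset AN^*$. Your instinct that the domain bookkeeping at the end is the delicate point is right, but without the bounded intertwiner produced by the resolvent there is no legitimate Fuglede-type theorem to feed the relation through.
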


\begin{rema}
This is indeed a generalization of the bounded version of the
Fuglede theorem. Observe that when $A,N\in B(H)$, then
$\overline{N}=N$, $D(A)=D(N)=H$, and $\sigma[p(A)]$ is a compact
set.
\end{rema}

\begin{proof}First, we claim that $\sigma(A)\neq \C$, whereby $A$ is
closed. Let $\lambda$ be in $\C\setminus \sigma[p(A)]$. Then, and as
in \cite{Dehimi-Mortad-squares-polynomials}, we obtain
\[p(A)-\lambda I=(A-\mu_1 I)(A-\mu_2I)\cdots (A-\mu_nI)\]
for some complex numbers $\mu_1$, $\mu_2$, $\cdots$, $\mu_n$. By
consulting again \cite{Dehimi-Mortad-squares-polynomials}, readers
see that $\sigma(A)\neq \C$.

Now, let $\lambda\in \rho(A)$. Then
\[NA\subset AN\Longrightarrow NA-\lambda N\subset AN-\lambda N=(A-\lambda I)N.\]
Since $D(A)\subset D(N)$, it is seen that $NA-\lambda N=N(A-\lambda
I)$. So
\[ N(A-\lambda I)\subset (A-\lambda I)N\Longrightarrow (A-\lambda
I)^{-1}N\subset N(A-\lambda I)^{-1}.\]

Since $\overline{N}$ is normal, we may now apply Theorem
\ref{fuglede-type closure normal THM} to get
\[(A-\lambda I)^{-1}N^*\subset N^*(A-\lambda I)^{-1}\]
because $(A-\lambda I)^{-1}\in B(H)$. Hence
\[N^*A-\lambda N^*\subset N^*(A-\lambda I)\subset (A-\lambda I)N^*=AN^*-\lambda N^*.\]
But
\[D(AN^*)\subset D(N^*)\text{ and }D(N^*A)\subset D(A)\subset D(N)\subset D(\overline{N})=D(N^*).\]
Thus, $D(N^*A)\subset D(AN^*)$, and so
\[N^*A\subset AN^*,\]
as needed.
\end{proof}

Now, we present a few consequences of the preceding result. The
first one is given without proof.

\begin{cor}
If $N$ is a densely defined closable operator such that
$\overline{N}$ is normal and if $A$ is an unbounded self-adjoint
operator with $D(A)\subset D(N)$, then
\[NA\subset AN\Longrightarrow N^*A\subset AN^*.\]
\end{cor}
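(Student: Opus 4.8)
The plan is to deduce this corollary directly from Theorem \ref{THM JAblonsky's et al generalization UNB} by choosing the one-variable polynomial $p$ to be the identity, that is, $p(z)=z$, so that $p(A)=A$ and $\sigma[p(A)]=\sigma(A)$.

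First I would check that the hypotheses of that theorem are all in force. The operator $N$ is assumed to be densely defined and closable with $\overline{N}$ normal, which is precisely what is needed there. The operator $A$, being self-adjoint, is in particular densely defined (so that $A^*$ exists and the equality $A=A^*$ is meaningful), and the domain inclusion $D(A)\subset D(N)$ is part of the hypothesis. The only remaining point is to verify that $\sigma[p(A)]=\sigma(A)\neq\C$, and this is immediate from the standard fact that a self-adjoint operator has real spectrum, $\sigma(A)\subset\R$; equivalently, every non-real scalar lies in $\rho(A)$. Hence Theorem \ref{THM JAblonsky's et al generalization UNB} applies and delivers $NA\subset AN\Longrightarrow N^*A\subset AN^*$, as claimed.

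I do not anticipate any genuine obstacle: the corollary is simply a specialization of the theorem, and the single external fact invoked, namely $\sigma(A)\subset\R$ for self-adjoint $A$, is classical. Should one prefer to avoid citing the spectral theorem, one may instead note that for self-adjoint $A$ and $\lambda=a+ib$ with $b\neq 0$ one has $\|(A-\lambda I)x\|^2=\|(A-aI)x\|^2+b^2\|x\|^2\geq b^2\|x\|^2$ for all $x\in D(A)$, together with the analogous estimate for $A-\overline{\lambda}I$ and the closedness of $A$, which forces $\lambda\in\rho(A)$ and hence $\sigma(A)\neq\C$ directly.
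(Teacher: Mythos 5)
Your proof is correct and is exactly the specialization the paper intends: the corollary is stated as an immediate consequence of Theorem \ref{THM JAblonsky's et al generalization UNB} (the paper omits the proof), obtained by taking $p(z)=z$ and using that a self-adjoint operator has real spectrum, so $\sigma(A)\neq\C$. Your optional elementary estimate for $\lambda\notin\R$ is a fine, standard alternative to citing the spectral theorem.
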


\begin{cor}
If $N$ is a densely defined closable operator such that
$\overline{N}$ is normal and if $A$ is a boundedly invertible
operator, then
\[NA\subset AN\Longrightarrow N^*A\subset AN^*.\]
\end{cor}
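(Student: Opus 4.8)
The plan is to reduce the statement to the bounded Fuglede theorem already recorded as Theorem~\ref{fuglede-type closure normal THM}, using the bounded operator $A^{-1}$ in the role of the operator $B$ there. Recall that ``boundedly invertible'' means there is a $B\in B(H)$ with $AB=I_H$ and $BA\subset I$; write $A^{-1}$ for this $B$. The point I will exploit repeatedly is the asymmetry $AA^{-1}=I_H$ (an identity valid on all of $H$) versus $A^{-1}A\subset I$ (valid only on $D(A)$), together with the associativity of composition of (possibly unbounded) operators and the elementary facts that $S\subset T$ implies both $CS\subset CT$ and $SC\subset TC$ for any operator $C$, and that $NI=N$, $IN=N$ on the appropriate natural domains.

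First I would pass from $NA\subset AN$ to an inclusion involving $A^{-1}$ in place of $A$. Left--multiplying $NA\subset AN$ by $A^{-1}$ yields $A^{-1}NA\subset A^{-1}AN\subset N$, the last inclusion because $A^{-1}A\subset I$. Right--multiplying the resulting inclusion $A^{-1}NA\subset N$ by $A^{-1}$ and using $AA^{-1}=I_H$ then gives
\[
A^{-1}N \subset NA^{-1}.
\]
Since $N$ is densely defined and closable with $\overline{N}$ normal, and $A^{-1}\in B(H)$, Theorem~\ref{fuglede-type closure normal THM} applies (with $A^{-1}$ playing the role of $B$ and $N$ the role of $A$), whence
\[
A^{-1}N^* \subset N^*A^{-1}.
\]

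It remains to run the first step in reverse. Left--multiplying $A^{-1}N^*\subset N^*A^{-1}$ by $A$ and using $AA^{-1}=I_H$ gives $N^*\subset AN^*A^{-1}$; right--multiplying by $A$ and using $A^{-1}A\subset I$ gives $N^*A\subset AN^*A^{-1}A\subset AN^*$, which is exactly the desired conclusion. I expect the only delicate point to be the bookkeeping in these manipulations: one must invoke $AA^{-1}=I_H$ precisely where a step genuinely collapses a composition back to the identity, and settle for the weaker $A^{-1}A\subset I$ elsewhere, since interchanging these two roles would be false. Everything else is the routine algebra of operator inclusions; note in particular that no hypothesis of the type $D(A)\subset D(N)$ is needed here, because bounded invertibility lets us work with $\lambda=0$ directly and so bypass the factorisation argument used in the proof of Theorem~\ref{THM JAblonsky's et al generalization UNB}.
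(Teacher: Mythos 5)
Your proof is correct and is essentially the paper's own argument: both derive $A^{-1}N\subset NA^{-1}$ from $NA\subset AN$ by multiplying by $A^{-1}$ (using $AA^{-1}=I$ and $A^{-1}A\subset I$), apply Theorem~\ref{fuglede-type closure normal THM} with $B=A^{-1}$ to get $A^{-1}N^*\subset N^*A^{-1}$, and then undo the conjugation to conclude $N^*A\subset AN^*$. Your version merely spells out the domain bookkeeping that the paper leaves implicit.
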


\begin{proof}We may write
\[NA\subset AN\Longrightarrow NAA^{-1}\subset ANA^{-1}\Longrightarrow A^{-1}N\subset NA^{-1}.\]
Since $A^{-1}\in B(H)$ and $\overline{N}$ is normal, Theorem
\ref{fuglede-type closure normal THM} gives
\[A^{-1}N^*\subset N^*A^{-1} \text{ and so }N^*A\subset AN^*,\]
as needed.

\end{proof}

A Putnam's version seems impossible to obtain unless strong
conditions are imposed. However, the following special case of a
possible Putnam's version is worth stating and proving. Besides, it
is somewhat linked to the important notion of anti-commutativity
(cf. \cite{Vasilescu anticommuting}).

\begin{pro}
If $N$ is a densely defined closable operator such that
$\overline{N}$ is normal and if $A$ is a densely defined operator
with $\sigma(A)\neq \C$, then
\[NA\subset -AN\Longrightarrow N^*A\subset -AN^*\]
whenever $D(A)\subset D(N)$.
\end{pro}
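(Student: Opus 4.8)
The plan is to imitate the proof of Theorem~\ref{THM JAblonsky's et al generalization UNB}, carrying out the sign changes forced by anti-commutativity, and to replace its appeal to the bounded Fuglede theorem (Theorem~\ref{fuglede-type closure normal THM}) by the bounded Fuglede--Putnam theorem (Theorem~\ref{fuglede-Putnam type closure normal THM}) applied to the pair $(N,-N)$. This substitution is legitimate because $\overline{-N}=-\overline N$ is again normal, and it is exactly what lets the anti-commuting instance of a Putnam version go through. To set up: since $\sigma(A)\neq\C$ is assumed outright, $A$ is automatically closed (any $\lambda\in\rho(A)$ gives $(A-\lambda I)^{-1}\in B(H)$, whence $A-\lambda I$, and so $A$, is closed). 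Fix such a $\lambda$; as $\rho(A)$ is open and nonempty we may take $\lambda\neq0$.

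Using $NA\subset -AN$ together with $D(A)\subset D(N)$ one obtains the two ``anti-intertwining'' relations
\[
N(A+\lambda I)\subset -(A-\lambda I)N\qquad\text{and}\qquad N(A-\lambda I)\subset -(A+\lambda I)N,
\]
since adding $\pm\lambda N$ to both sides of $NA\subset -AN$ costs nothing once $D(A)\subset D(N)$. Composing the two (and noting $D(A^{2})\subset D(A)\subset D(N)$) gives $NA^{2}\subset A^{2}N$. The idea is then to right-multiply, say, the second relation by $(A-\lambda I)^{-1}\in B(H)$, then left-multiply by $(A-\lambda I)^{-1}$, and simplify through $(A-\lambda I)^{-1}(A+\lambda I)=I+2\lambda(A-\lambda I)^{-1}\in B(H)$ — here one uses that $N(A-\lambda I)^{-1}=\overline N(A-\lambda I)^{-1}$ is everywhere defined and bounded, because $\overline N$ is closed and $(A-\lambda I)^{-1}\in B(H)$. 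Combined with the companion relation, this should reduce the problem to an intertwining statement between the closable operator $N$ and bounded operators manufactured from the resolvent of $A$, to which Theorem~\ref{fuglede-Putnam type closure normal THM} (with $-\overline N$ in the role of the second normal operator) applies and yields the corresponding relation for $N^{*}$. Finally one peels the resolvents back off: from the value identity and from the domain chain $D(N^{*}A)\subset D(A)\subset D(N)\subset D(\overline N)=D(N^{*})$ — together with the inclusion $D(A^{*})\subset D(N^{*})$ that appears on taking adjoints, the dual of $D(A)\subset D(N)$ — one concludes $N^{*}A\subset -AN^{*}$, exactly as in the last lines of the proof of Theorem~\ref{THM JAblonsky's et al generalization UNB}.

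The step I expect to be the genuine obstacle is the middle one. In the commuting case of Theorem~\ref{THM JAblonsky's et al generalization UNB}, $NA\subset AN$ immediately gives $(A-\lambda I)^{-1}N\subset N(A-\lambda I)^{-1}$, i.e. the bounded operator $(A-\lambda I)^{-1}$ \emph{commutes} with $N$, and one simply quotes Theorem~\ref{fuglede-type closure normal THM}. Here the additive shift $\lambda I$ wrecks anti-commutativity: $A$ anti-commutes with $N$, but $A-\lambda I$ does not. One is thus forced to drag $A-\lambda I$ and $A+\lambda I$ through the computation together — equivalently, to route the argument through $A^{2}$ (from $NA^{2}\subset A^{2}N$) — and the bulk of the proof is the bookkeeping of domains required to descend from the resulting $A^{2}$-level (resolvent-level) identity back down to $N^{*}A\subset -AN^{*}$. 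The conceptual ingredients — closedness of $\overline N$, the $(N,-N)$ trick, $\sigma(A)\neq\C$, and $D(A)\subset D(N)$ — are all already available; it is this last descent, and nothing deeper, where the care is needed.
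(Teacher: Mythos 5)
There is a genuine gap, and it sits exactly where you say you expect it: the ``middle step'' is never actually carried out, and the manipulations you outline do not produce a statement to which Theorem \ref{fuglede-Putnam type closure normal THM} applies. That theorem needs a \emph{single} bounded operator $B$ with $BN\subset (-N)B$. From $N(A-\lambda I)\subset -(A+\lambda I)N$ your right/left multiplications by $(A-\lambda I)^{-1}$ give
\[
(A-\lambda I)^{-1}N\;\subset\;-\bigl(I+2\lambda(A-\lambda I)^{-1}\bigr)N(A-\lambda I)^{-1},
\]
and the factor $I+2\lambda(A-\lambda I)^{-1}$ does not pass through $N$, so the right-hand side is not of the form $(-N)B$; the extra term could only be absorbed by inverting $A+\lambda I$, but $-\lambda$ need not belong to $\rho(A)$ (and even then one ends up with $(A+\lambda I)^{-1}N\subset -N(A-\lambda I)^{-1}$, with \emph{different} bounded operators on the two sides, which is again outside the scope of the quoted theorem). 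The alternative you mention, routing through $NA^{2}\subset A^{2}N$, is also not a repair as stated: applying Theorem \ref{THM JAblonsky's et al generalization UNB} to $A^{2}$ would require $\sigma[p(A^{2})]\neq\C$ for some polynomial $p$, and $\sigma(A)\neq\C$ does not imply $\sigma(A^{2})\neq\C$ for a general closed operator (the spectrum of $A$ can be a half-plane through the origin); moreover, even granted $N^{*}A^{2}\subset A^{2}N^{*}$, the descent to $N^{*}A\subset -AN^{*}$ with the correct sign is precisely the part you defer. The side remark that $D(A)\subset D(N)$ ``dualizes'' to $D(A^{*})\subset D(N^{*})$ is also unjustified: taking adjoints reverses operator inclusions, not domain inclusions.

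For comparison, the paper avoids the resolvent-shift problem entirely by a block-matrix device: it sets
\[
\widetilde{N}=\begin{pmatrix} N & 0 \\ 0 & -N \end{pmatrix},\qquad
\widetilde{A}=\begin{pmatrix} 0 & A \\ A & 0 \end{pmatrix},
\]
observes that $NA\subset -AN$ is equivalent to the genuine commutation relation $\widetilde{N}\widetilde{A}\subset\widetilde{A}\widetilde{N}$, that $\overline{\widetilde N}$ is normal, $D(\widetilde A)\subset D(\widetilde N)$ and $\sigma(\widetilde A)\neq\C$, and then applies Theorem \ref{THM JAblonsky's et al generalization UNB} wholesale; reading off the entries of $\widetilde{N}^{*}\widetilde{A}\subset\widetilde{A}\widetilde{N}^{*}$ gives $N^{*}A\subset -AN^{*}$. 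This is the rigorous form of the $(N,-N)$ idea you gesture at: the sign is carried by the diagonal of $\widetilde N$ rather than fought against at the resolvent level. If you want to salvage your write-up, replace the scalar resolvent bookkeeping by this matrix reduction (or supply the missing bounded intertwiner by some other means); as it stands, the proof is incomplete at its decisive step.
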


\begin{proof}Consider
\[\widetilde{N}=\left(
                  \begin{array}{cc}
                    N & 0 \\
                    0 & -N \\
                  \end{array}
                \right)\text{ and }\widetilde{A}=\left(
                                                   \begin{array}{cc}
                                                     0 & A \\
                                                     A & 0 \\
                                                   \end{array}
                                                 \right)
\]
where $D(\widetilde{N})=D(N)\oplus D(N)$ and
$D(\widetilde{A})=D(A)\oplus D(A)$. Then $\overline{\widetilde{N}}$
is normal and $\widetilde{A}$ is closed. Besides
$\sigma(\widetilde{A})\neq \C$. Now
\[\widetilde{N}\widetilde{A}=\left(
                                                   \begin{array}{cc}
                                                     0 & NA \\
                                                     -NA & 0 \\
                                                   \end{array}
                                                 \right)\subset \left(
                                                   \begin{array}{cc}
                                                     0 & -AN \\
                                                     AN & 0 \\
                                                   \end{array}
                                                 \right)=\widetilde{A}\widetilde{N}\]
for $NA\subset -AN$. Since $D(\widetilde{A})\subset
D(\widetilde{N})$, Theorem \ref{THM JAblonsky's et al generalization
UNB} applies, i.e. it gives $\widetilde{N}^*\widetilde{A}\subset
\widetilde{A}\widetilde{N}^*$ which, upon examining their
           entries, yields the required result.
\end{proof}

We finish this section by giving counterexamples to some
"generalizations".

\begin{exa}(\cite{Mortad-Fuglede-Putnham-All-unbd}) Consider the unbounded linear operators $A$ and $N$ which are defined by
\[Af(x)=(1+|x|)f(x)\text{ and } Nf(x)=-i(1+|x|)f'(x)\]
(with $i^2=-1$) on the domains
\[D(A)=\{f\in L^2(\R): (1+|x|)f\in L^2(\R)\}\]
and
\[D(N)=\{f\in L^2(\R): (1+|x|)f'\in L^2(\R)\}\]
respectively, and where the derivative is taken in the
distributional sense. Then $A$ is a boundedly invertible, positive,
self-adjoint unbounded operator. As for $N$, it is an unbounded
normal operator $N$ (details may consulted in
\cite{Mortad-Fuglede-Putnham-All-unbd}). It was shown that such that
\[AN^*=NA\text{ but }AN\not\subset N^*A \text{ and }N^*A\not\subset AN\]
(in fact $ANf\neq N^*Af$ for all $f\neq 0$).

So, what this example is telling us is that $NA=AN^*$ (and not just
an "inclusion"), that $N$ and $N^*$ are both normal, $\sigma(A)\neq
\C$ (as $A$ is self-adjoint), but $NA\not\subset AN^*$.
\end{exa}

This example can further be beefed up to refute certain possible
generalizations.

\begin{exa}\label{hghgdfsdzretrutioyoyoypypypy}(Cf. \cite{Mortad-Fuglede-ultimate generalization}) There exist a closed operator $T$ and a normal $M$ such that $TM\subset MT$ but $TM^*\not\subset M^*T$ and $M^*T\not\subset
TM^*$. Indeed, consider
\[M=\left(
      \begin{array}{cc}
        N^* & 0 \\
        0 & N \\
      \end{array}
    \right)\text{ and } T=\left(
                            \begin{array}{cc}
                              0 & 0 \\
                              A & 0 \\
                            \end{array}
                          \right)
\]
where $N$ is normal with domain $D(N)$ and $A$ is closed with domain
$D(A)$ and such that $AN^*=NA$\text{ but }$AN\not\subset N^*A$
\text{ and }$N^*A\not\subset AN$ (as defined above). Clearly, $M$ is
normal and $T$ is closed. Observe that $D(M)=D(N^*)\oplus D(N)$ and
$D(T)=D(A)\oplus L^2(\R)$. Now,
\[TM=\left(
                            \begin{array}{cc}
                              0 & 0 \\
                              A & 0 \\
                            \end{array}
                          \right)\left(
      \begin{array}{cc}
        N^* & 0 \\
        0 & N \\
      \end{array}
    \right)=\left(
              \begin{array}{cc}
                0_{D(N^*)} & 0_{D(N)} \\
                AN^* & 0 \\
              \end{array}
            \right)=\left(
              \begin{array}{cc}
                0 & 0_{D(N)} \\
                AN^* & 0 \\
              \end{array}
            \right)
    \]
where e.g. $0_{D(N)}$ is the zero operator restricted to $D(N)$.
Likewise
\[MT=\left(
      \begin{array}{cc}
        N^* & 0 \\
        0 & N \\
      \end{array}
    \right)\left(
                            \begin{array}{cc}
                              0 & 0 \\
                              A & 0 \\
                            \end{array}
                          \right)=\left(
                            \begin{array}{cc}
                              0 & 0 \\
                              NA & 0 \\
                            \end{array}
                          \right).\]
Since $D(TM)=D(AN^*)\oplus D(N)\subset D(NA)\oplus L^2(\R)=D(MT)$,
it ensues that $TM\subset MT$. Now, it is seen that
\[TM^*=\left(
                            \begin{array}{cc}
                              0 & 0 \\
                              A & 0 \\
                            \end{array}
                          \right)\left(
      \begin{array}{cc}
        N & 0 \\
        0 & N^* \\
      \end{array}
    \right)=\left(
              \begin{array}{cc}
                0 & 0_{D(N^*)} \\
                AN & 0 \\
              \end{array}
            \right)\]
and
\[M^*T=\left(
      \begin{array}{cc}
        N & 0 \\
        0 & N^* \\
      \end{array}
    \right)\left(
                            \begin{array}{cc}
                              0 & 0 \\
                              A & 0 \\
                            \end{array}
                          \right)=\left(
                            \begin{array}{cc}
                              0 & 0 \\
                              N^*A & 0 \\
                            \end{array}
                          \right).\]

Since $ANf\neq N^*Af$ for any $f\neq 0$, we infer that
$TM^*\not\subset M^*T$ and $M^*T\not\subset TM^*$.
\end{exa}

\section{Some applications to the commutativity of self-adjoint operators}

In \cite{Alb-Spain}, \cite{Dehimi-Mortad-INVERT},
\cite{Gustafson-Mortad-I}, \cite{Gustafson-Mortad-II},
\cite{Jung-Mortad-Stochel}, \cite{MHM1},
\cite{Mortad-Thesis-Edinburgh-2003}, \cite{MHM7},
\cite{Mortad-FUG-PUT SURVEY BOOK}, and \cite{Reh}, the question of
the self-adjointness of the normal product of two self-adjoint
operators was tackled in different settings (cf.
\cite{Benali-Mortad}). In all cases, the commutativity of the
operators was reached.

Here, we deal with the similar question where the unbounded
(operator) factor is closed and symmetric which, and it is known,
differs from self-adjointness (the two classes can behave quite
differently, cf. \cite{Mortad-cex-BOOK}).

First, we give a perhaps known lemma (cf. Lemmata 2.1 \& 2.2 in
\cite{Gustafson-MZ-positive-PROD-1968}). See also
\cite{Meziane-Mortad-I} for the case of normality.

\begin{lem}\label{product}(\cite{Jung-Mortad-Stochel})
Let $A$ and $B$ be self-adjoint operators. Assume that $B\in B(H)$
and $BA \subseteq AB$. Then the following assertions hold:
\begin{enumerate}
\item[(i)]  $AB$ is a self-adjoint operator and $AB=\overline{BA}$,
\item[(ii)] if $A$ and $B$ are positive so is $AB$.
\end{enumerate}
\end{lem}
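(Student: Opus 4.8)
The plan is to prove (i) first and then derive (ii) quickly from it. For (i), the key is to identify $(BA)^*$ and $(AB)^*$ carefully. Since $B$ is bounded and everywhere defined, we have the exact identity $(BA)^* = A^*B^* = AB$ (using self-adjointness of $A$ and $B$), so $\overline{BA} = (BA)^{**} = (AB)^*$ provided $BA$ is densely defined — which it is, because $BA \subset AB$ forces $D(BA) \supset D(A)$, in fact $D(BA) = D(A)$ is dense. So I would first record $(BA)^* = AB$. Next I would show that $BA$ is closable and that its closure $\overline{BA}$ coincides with $AB$. The inclusion $\overline{BA} \subset AB$ is automatic once we know $AB$ is closed (it is, since $A$ is closed and $B \in B(H)$, by the fact recalled in the background that $TS$ is closed when $T$ is closed and $S \in B(H)$). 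For the reverse inclusion $AB \subset \overline{BA}$, I would use $BA \subset AB$ and hence $\overline{BA} \subset \overline{AB} = AB$ in one direction, and for the other direction invoke the hypothesis symmetrically: applying Theorem \ref{Fug-Put UNBD A BD} or a direct argument, from $BA \subset AB$ one gets, by taking adjoints, $(AB)^* \subset (BA)^* = AB$, i.e. $(AB)^* \subset AB$, so $AB$ is symmetric; combined with $AB = (BA)^* = \overline{BA}^*$ this should pin down self-adjointness.

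Let me restructure more cleanly. Set $S := BA$. Then $S$ is densely defined with $D(S) = D(A)$, and $S^* = A^*B^* = AB$ exactly because $B \in B(H)$. In particular $S^* = AB$ is closed (again since $A$ closed, $B$ bounded), so $S$ is closable with $\overline{S} = S^{**} = (AB)^*$. Now the crucial step: I claim $AB$ is symmetric. Indeed, $BA \subset AB$ gives, upon taking adjoints, $(AB)^* \subset (BA)^* = AB$. Wait — that direction is the wrong one for symmetry; I actually need $AB \subset (AB)^*$. So instead: from $BA \subset AB$ and $B = B^*$, $A = A^*$, take adjoints to get $(AB)^* \subset (BA)^* $. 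Hmm, $(AB)^* \supset B^*A^* = BA$ always, and since $B$ bounded $(BA)^* = AB$. Let me just use: $AB \subset (BA)^*{}^* $... The clean route is: $S = BA$ satisfies $S \subset AB = S^*$, because $BAx = ABx$ for $x \in D(A) = D(S)$ and $ABx = S^*x$ there. Hence $S$ is symmetric, so $\overline{S}$ is symmetric, i.e. $\overline{S} \subset \overline{S}^* = S^*= AB$. But also $AB = S^* = (\overline{S})^* \supset \overline{S}$ and $AB$ is closed, so $AB = \overline{S}^{**}\ldots$; combining $\overline{S} \subset AB$ with $AB = S^*$ and $\overline{S}$ symmetric plus $AB$ closed, self-adjointness of $AB$ follows from $AB = S^* = \overline{S}^* = \overline{S}$ once we also show $AB \subset \overline{S}$. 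That last inclusion is exactly where I expect the main obstacle: showing every $x \in D(AB) = \{x : Bx \in D(A)\}$ lies in $D(\overline{BA})$, i.e. is a graph-limit of points of $D(A)$ on which $BA$ converges. I would handle this by approximating $x$ using a sequence $x_n \to x$ with $x_n \in D(A)$; then $Bx_n \to Bx$ and one needs $A B x_n \to ABx$, which requires choosing the $x_n$ so that $ABx_n$ converges — achievable by a standard core argument or by using that $B$ commutes with the resolvent/spectral projections of $A$ (which itself follows from $BA \subset AB$ via the Fuglede-type Theorem \ref{fuglede-type closure normal THM} applied with the normal operator $A$).

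Finally, part (ii): assuming $A \geq 0$ and $B \geq 0$ in addition, I want $AB \geq 0$. Since $AB$ is already self-adjoint by (i), it suffices to check $\langle ABx, x\rangle \geq 0$ for $x$ in a core, e.g. for $x \in D(A)$ with $Bx \in D(A)$. The trick is to write $B = B^{1/2}B^{1/2}$ with $B^{1/2} \in B(H)$ self-adjoint and positive, and to check that $B^{1/2}$ also commutes with $A$: from $BA \subset AB$ and Theorem \ref{fuglede-type closure normal THM} (with $A$ normal, being self-adjoint), one gets $f(B)A \subset Af(B)$ for bounded Borel $f$, in particular $B^{1/2}A \subset AB^{1/2}$. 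Then for suitable $x$, $\langle ABx, x \rangle = \langle A B^{1/2} B^{1/2} x, x\rangle = \langle B^{1/2} A B^{1/2} x, x \rangle = \langle A (B^{1/2}x), B^{1/2} x\rangle \geq 0$ since $A \geq 0$ and $B^{1/2}x \in D(A)$; a limiting/core argument then extends the inequality to all of $D(AB)$, giving $AB \geq 0$. The main delicate point throughout is the justification of the domain manipulations and the core argument in (i); everything else is bookkeeping with adjoints and the already-established Fuglede-type theorem.
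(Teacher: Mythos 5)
First, note that the paper itself gives no proof of this lemma: it is quoted from \cite{Jung-Mortad-Stochel}, so your argument can only be judged on its own merits, not compared with an in-paper proof. Your skeleton for (i) is sound: since $B\in B(H)$, $(BA)^*=A^*B^*=AB$ exactly, $D(BA)=D(A)$ is dense, $AB$ is closed, the hypothesis says precisely that $S=BA$ is symmetric (because $S\subset S^*=AB$), hence $\overline{BA}\subset AB$, and both self-adjointness of $AB$ and $AB=\overline{BA}$ follow once the reverse inclusion $AB\subset\overline{BA}$ is established. You correctly identify that reverse inclusion as the crux, but you leave it at the level of ``a standard core argument or commutation with the resolvent/spectral projections of $A$'', and the tool you invoke for that commutation is misattributed: Theorem \ref{fuglede-type closure normal THM} applied to the self-adjoint $A$ merely returns the hypothesis $BA\subset AB$; it does not by itself produce commutation with the spectral measure. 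The gap closes in a few lines and should be closed explicitly: for $y=(A-i)^{-1}x$ one has $By\in D(A)$ (as $D(A)\subset D(AB)$) and $(A-i)By=B(A-i)y=Bx$, so $B(A-i)^{-1}=(A-i)^{-1}B$, and hence $B$ commutes with the spectral projections $E_n=E_A([-n,n])$ by standard spectral theory. Then for $x\in D(AB)$ take $x_n=E_nx\in D(A)$: $x_n\to x$ and $BAx_n=ABE_nx=AE_nBx=E_nABx\to ABx$ (using $Bx\in D(A)$), so $x\in D(\overline{BA})$ with $\overline{BA}x=ABx$, which is exactly $AB\subset\overline{BA}$.

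For (ii), the commutation $\sqrt{B}A\subset A\sqrt{B}$ you need is exactly the paper's Lemma \ref{PAMS 03 LEMMA squr rt BA subset AB } (again, not Theorem \ref{fuglede-type closure normal THM}); with it, your computation $\langle ABx,x\rangle=\langle A\sqrt{B}x,\sqrt{B}x\rangle\geq0$ is valid for every $x\in D(A)$ (note $x\in D(A)$ already forces $Bx,\sqrt{B}x\in D(A)$, so your ``suitable $x$'' is simply $D(A)$), and the limiting step is legitimate because part (i) shows $AB=\overline{BA}$, so $D(A)=D(BA)$ is a core for $AB$ and $\langle ABx_n,x_n\rangle\to\langle ABx,x\rangle$ along a graph-convergent sequence. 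In summary: the strategy is correct and does prove the lemma, but the decisive step of (i) was only gestured at, and the two appeals to the Fuglede-type theorem should be replaced, respectively, by the elementary resolvent computation above and by Lemma \ref{PAMS 03 LEMMA squr rt BA subset AB }.
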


We shall also have need for the following result:

\begin{lem}\label{PAMS 03 LEMMA squr rt BA subset AB }
Let $B\in B(H)$ be self-adjoint. If $BA\subset AB$ where $A$ is
closed, then $f(B)A\subset Af(B)$ for any continuous function $f$ on
$\sigma(B)$. In particular, and if $B$ is positive, then $\sqrt
BA\subset A\sqrt B$.
\end{lem}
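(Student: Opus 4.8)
The plan is to reduce the general continuous-function statement to the polynomial case via a Stone--Weierstrass approximation, and then handle polynomials by the obvious induction on powers of $B$. First I would observe that for a fixed polynomial $p$, the relation $p(B)A\subset Ap(B)$ follows from $BA\subset AB$ by a straightforward computation: if $x\in D(A)$, then $BAx=ABx$, so $B^2Ax=BABx$; here one must check that $Bx\in D(A)$ (which is exactly the content of $BA\subset AB$, namely $D(A)\subset D(AB)$, i.e. $Bx\in D(A)$ for $x\in D(A)$), hence $BABx=ABBx=AB^2x$, and by iteration $B^nAx=AB^nx$ for all $n$, with $B^nx\in D(A)$ throughout. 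Taking linear combinations gives $p(B)A\subset Ap(B)$ for every polynomial $p$; note $D(A)\subset D(Ap(B))$ since $p(B)x\in D(A)$, so the inclusion of domains is automatic.

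Next I would pass to a general continuous $f$ on $\sigma(B)$. Since $B\in B(H)$ is self-adjoint, $\sigma(B)$ is a compact subset of $\R$, so by the Weierstrass theorem there is a sequence of polynomials $p_k$ with $p_k\to f$ uniformly on $\sigma(B)$, whence $p_k(B)\to f(B)$ in operator norm by the continuous functional calculus. Now fix $x\in D(A)$. For each $k$ we have $p_k(B)x\in D(A)$ and $Ap_k(B)x=p_k(B)Ax$. As $k\to\infty$, $p_k(B)x\to f(B)x$ and $Ap_k(B)x=p_k(B)Ax\to f(B)Ax$. Since $A$ is closed, this forces $f(B)x\in D(A)$ and $Af(B)x=f(B)Ax$. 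This proves $f(B)A\subset Af(B)$, the domain inclusion being again automatic from $f(B)x\in D(A)$.

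Finally, the particular case is immediate: if $B$ is positive, then the square root function $t\mapsto\sqrt t$ is continuous on $\sigma(B)\subset[0,\infty)$, and $\sqrt B$ obtained from the functional calculus coincides with the unique positive self-adjoint square root, so $\sqrt B\,A\subset A\sqrt B$.

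The main obstacle, and the only place requiring care, is the limiting argument: one must invoke the closedness of $A$ in exactly the right form, checking that the pair $(p_k(B)x,\,Ap_k(B)x)$ lies in the graph of $A$ and converges to $(f(B)x,\,f(B)Ax)$, so that the limit lies in the graph as well. Everything else is routine: the norm convergence $p_k(B)\to f(B)$ is standard functional calculus, and the boundedness of each $p_k(B)$ and of $f(B)$ ensures there are no domain subtleties on that side. One should also remark that the hypothesis that $A$ is merely closed (not self-adjoint) is genuinely all that is used, which is the point of stating the lemma at this level of generality before the commutativity applications.
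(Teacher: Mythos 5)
Your proof is correct. Note, however, that the paper itself does not prove this lemma: it is quoted from [MHM1, Proposition 1] (where $A$ was assumed self-adjoint), together with the remark that an inspection of that proof shows only the closedness of $A$ is needed. Your argument --- first propagating $BA\subset AB$ to powers $B^nA\subset AB^n$ by induction and hence to polynomials, then approximating $f$ uniformly on the compact set $\sigma(B)$ by polynomials so that $p_k(B)\to f(B)$ in norm, and finally passing to the limit in the graph of the closed operator $A$ via the pairs $\bigl(p_k(B)x, A p_k(B)x\bigr)\to\bigl(f(B)x, f(B)Ax\bigr)$ --- is exactly the standard argument behind that citation, and it makes the paper's remark precise: the only property of $A$ used is that its graph is closed. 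So your proposal is a correct, self-contained justification of what the paper leaves to a reference, rather than a deviation from the paper's method; the one step that deserved care, the closed-graph limit, is handled correctly, and the identification of the functional-calculus $\sqrt{B}$ with the unique positive self-adjoint square root settles the final claim.
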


\begin{rema}
In fact, the previous lemma was shown in (\cite{MHM1}, Proposition
1) under the assumption "$A$ being unbounded and self-adjoint", but
by looking closely at its proof, we see that only the closedness of
$A$ was needed (cf. \cite{Bernau JAusMS-1968-square root} and
\cite{Jablonski et al 2014}).
\end{rema}

We are now ready to state and prove the first result of this
section.

\begin{thm}\label{BDM-THM-AB self-adjoint}
Let $A$ be an unbounded closed and symmetric operator with domain
$D(A)$, and let $B\in B(H)$ be positive. If $AB$ is normal, then
$BA\subset AB$, and so $AB$ is self-adjoint. Also, $\overline{BA}$
is self-adjoint.

Besides, $B|A|\subset |A|B$, and so $|A|B$ is self-adjoint and
positive. Moreover, $|A|B=\overline{B|A|}$.
\end{thm}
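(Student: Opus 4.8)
The plan is to first produce the commutation $BA\subset AB$ from the normality of $AB$, and then to deduce everything else from the two lemmas already at our disposal. The central difficulty is the passage from ``$AB$ normal'' to ``$BA\subset AB$'': this is exactly the kind of conclusion the Fuglede--Putnam machinery of Section~3 is built to deliver, so the first task is to set up a Fuglede-type argument whose output is the desired inclusion.

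First I would record that since $A$ is closed and $B\in B(H)$, the product $AB$ is closed (this is the standard fact, quoted in the background, that $TS$ is closed when $T$ is closed and $S\in B(H)$). By hypothesis $AB$ is normal, so $N:=AB$ is a normal operator and $N^*=(AB)^*$ is normal as well. The goal inclusion $BA\subset AB$ should be obtained by applying the original Fuglede--Putnam theorem (Theorem~\ref{Fug-Put UNBD A BD}) or the closure version (Theorem~\ref{fuglede-type closure normal THM}) to a suitable intertwining relation. The natural relation to exploit is that $B$ commutes with the normal operator $N=AB$: indeed, since $B$ is self-adjoint and bounded, one checks on the natural domains that $B(AB)\subset(AB)B$, i.e.\ $BN\subset NB$. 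Applying Theorem~\ref{fuglede-type closure normal THM} (with the roles filled by the bounded self-adjoint $B$ and the normal $N=AB$) then yields $BN^*\subset N^*B$, that is, $B(AB)^*\subset(AB)^*B$. The hard part will be to convert these commutation statements about $N=AB$ and $N^*=(AB)^*$ back into the clean inclusion $BA\subset AB$; here I expect to use the self-adjointness of $A$-like structure carefully, together with the equality $(AB)^*=BA^*=BA$ valid because $B\in B(H)$ is self-adjoint and $A$ is symmetric, so that the adjoint of $AB$ is computable and feeds directly into the argument. Once $BA\subset AB$ is in hand, Lemma~\ref{product}(i) immediately gives that $AB$ is self-adjoint and that $AB=\overline{BA}$, so $\overline{BA}$ is self-adjoint as well.

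For the statements about $|A|$, I would invoke Lemma~\ref{PAMS 03 LEMMA squr rt BA subset AB }. Since $B$ is positive and self-adjoint and we have just established $BA\subset AB$ with $A$ closed, that lemma applies with $f(t)=\sqrt{t}$ on $\sigma(B)$, yielding $\sqrt{B}\,A\subset A\sqrt{B}$. To reach the assertion about $|A|$ rather than about $A$ itself, I would instead run the same commutation argument at the level of $|A|$: the key observation is that $|A|$ is positive self-adjoint (von Neumann's theorem, since $A$ is closed) with $D(A)=D(|A|)$, and that $B$ commutes with $A$ in the sense $BA\subset AB$; one then transfers this to $B|A|\subset|A|B$. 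The cleanest route is to note that $A^*A=|A|^2$ and that commutation of $B$ with the self-adjoint operator $|A|^2$ propagates to its square root $|A|$ by Lemma~\ref{PAMS 03 LEMMA squr rt BA subset AB } (applied with the positive self-adjoint operator in place of $B$, after checking the requisite boundedness), giving $B|A|\subset|A|B$.

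Finally, with $B|A|\subset|A|B$ established, $|A|$ self-adjoint, and $B\in B(H)$ self-adjoint and positive, I would close the argument by a second application of Lemma~\ref{product}: part~(i) gives that $|A|B$ is self-adjoint with $|A|B=\overline{B|A|}$, and part~(ii), using that both $|A|$ and $B$ are positive, gives that $|A|B$ is positive. This accounts for every clause of the statement. The only genuinely delicate point, which I would treat with care, is the first Fuglede--Putnam step and the accompanying domain bookkeeping needed to identify $(AB)^*$ and thereby extract $BA\subset AB$; the remainder is an orchestration of the two lemmas.
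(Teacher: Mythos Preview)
Your Fuglede step rests on a claim that does not hold: you assert that ``one checks on the natural domains that $B(AB)\subset(AB)B$'', but this amounts to $BABx=AB^2x$ for every $x$ with $Bx\in D(A)$, which is precisely the conclusion $BAy=ABy$ (with $y=Bx$) you are trying to establish. There is no a priori reason for $B$ to commute with $N=AB$. The paper proceeds differently: from the symmetry of $A$ one has $B(AB)=BAB\subset BA^*B\subset(AB)^*B$, i.e.\ an \emph{intertwining} relation $BN\subset MB$ between the two normal operators $N=AB$ and $M=(AB)^*$. Fuglede--Putnam (Theorem~\ref{Fug-Put UNBD A BD}) then yields $BN^*\subset M^*B$, that is $B(AB)^*\subset\overline{AB}\,B=AB^2$; combined with $B^2A\subset B^2A^*\subset B(AB)^*$ this gives $B^2A\subset AB^2$, and only then does Lemma~\ref{PAMS 03 LEMMA squr rt BA subset AB } (square root of the positive $B^2$) produce $BA\subset AB$.

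Two further gaps lie downstream. First, the chain ``$(AB)^*=BA^*=BA$'' is wrong on both counts: in general one only has $BA^*\subset(AB)^*$, and symmetry of $A$ gives $BA\subset BA^*$, not equality. Second, you invoke Lemma~\ref{product}(i) to deduce that $AB$ is self-adjoint from $BA\subset AB$, but that lemma requires $A$ to be self-adjoint, which is not among the hypotheses here; the paper instead argues directly that $AB$ and $(AB)^*$ agree on $D(A)$ and then uses the equality $D(AB)=D((AB)^*)$ coming from normality. (Your appeal to Lemma~\ref{product} for $|A|B$ is, by contrast, legitimate, since $|A|$ is genuinely self-adjoint; but you still owe the passage $BA\subset AB\Rightarrow B|A|^2\subset|A|^2B$, and the descent to $|A|$ uses the spectral calculus of the unbounded self-adjoint $|A|^2$, not Lemma~\ref{PAMS 03 LEMMA squr rt BA subset AB } with the roles of $B$ and $|A|^2$ swapped.)
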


\begin{proof}Since $B\in B(H)$ is self-adjoint, we have
$(BA)^*=A^*B$ and $BA^*\subset (AB)^*$. Now, write
\[B(AB)=BAB\subset BA^*B\subset (AB)^*B.\]
Since $AB$ and $(AB)^*$ are both normal, the Fuglede-Putnam theorem
applies and gives
\[B(AB)^*\subset (AB)^{**}B=\overline{AB}B=AB^2,\]
i.e.
\[B^2A\subset B^2A^*\subset B(AB)^*\subset AB^2.\]
Since $A$ is closed and $B\in B(H)$ is positive, Lemma \ref{PAMS 03
LEMMA squr rt BA subset AB } gives
\[BA\subset AB.\]
To show that $AB$ is self-adjoint, we proceed as follows: Observe
that
\[BA\subset BA^*\subset (AB)^*.\]
Since we also have $BA\subset AB$, we now know that
\[BAx=ABx=(AB)^*x\]
for all $x\in D(A)$. This says that $AB$ and $(AB)^*$ coincide on
$D(A)$. Denoting the restrictions of the latter operators to $D(A)$
by $T$ and $S$ respectively, it is seen that
\[T-S\subset 0, ~T\subset AB,\text{ and }S\subset (AB)^*.\]
Hence
\[(AB)^*-AB\subset T^*-S^*\subset (T-S)^*=0.\]
Since $D(AB)=D[(AB)^*]$ thanks to the normality of $AB$, it ensues
that $AB=(AB)^*$, that is, $AB$ is self-adjoint.

Now, we show that $\overline{BA}$ is self-adjoint. First, we show
that $\overline{BA}$ is normal. Clearly $BA^*\subset A^*B$ for we
already know that $BA\subset AB$. Hence
\[BA^*A\subset A^*BA\subset A^*AB.\]
Therefore
\[\overline{BA}(\overline{BA})^*=ABA^*B\subset AA^*B^2\]
and
\[(\overline{BA})^*\overline{BA}=A^*BAB\subset A^*AB^2.\]
By Lemma \ref{product}, it is seen that both of $AA^*B^2$ and
$AA^*B^2$ are self-adjoint. By the maximality of self-adjoint
operators, it ensues that
\[\overline{BA}(\overline{BA})^*=AA^*B^2\text{ and }(\overline{BA})^*\overline{BA}=A^*AB^2.\]
Since $AB$ is self-adjoint, $(AB)^2$ is self-adjoint. But
\[(AB)^2=ABAB\subset AA^*B^2\]
and so $(AB)^2=AA^*B^2$. Similarly,
\[ABAB\subset A^*BAB\subset A^*AB^2\]
or $(AB)^2=A^*AB^2$. Therefore, we have shown that
\[(\overline{BA})^*\overline{BA}=\overline{BA}(\overline{BA})^*.\]
In other words, $\overline{BA}$ is normal.

To infer that $\overline{BA}$ is self-adjoint, observe that
$BA\subset AB$ gives $\overline{BA}\subset AB$, but because normal
operators are maximally normal, we obtain $\overline{BA}=AB$, from
which we derive the self-adjointness of $\overline{BA}$.

To show the last claim of the theorem, consider again $BA^*A\subset
A^*AB$. So, $B|A|\subset |A|B$ by the spectral theorem say. Since
$B\geq0$, Lemma \ref{product} gives the self-adjointness and the
positivity of $|A|B$, as well as $|A|B=\overline{B|A|}$. This
completes the proof.
\end{proof}

\begin{rema}
Under the assumptions of the preceding theorem (by consulting
\cite{Boucif-Dehimi-Mortad}), we have:
\[|AB|=|\overline{BA}|=|A|B=\overline{B|A|}.\]
\end{rema}

\begin{cor}\label{23/02/2022 ...COR}
Let $A$ be an unbounded closed and symmetric operator and let $B\in
B(H)$ be positive. Suppose that $AB$ is normal. Then
\[BA \text{ is closed} \Longrightarrow A\text{ is self-adjoint}.\]
In particular, if $B$ is invertible, then $A$ is self-adjoint.
\end{cor}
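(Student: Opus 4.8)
The plan is to combine the conclusions of Theorem~\ref{BDM-THM-AB self-adjoint} with a short general fact: a densely defined, closed, symmetric operator $A$ is self-adjoint as soon as $\overline{BA}=AB$ forces $BA$ itself to be closed, because then $BA=\overline{BA}=AB$ and one can compare adjoints. More precisely, from Theorem~\ref{BDM-THM-AB self-adjoint} we already know that $BA\subset AB$, that $AB$ is self-adjoint, and that $\overline{BA}=AB$. Now assume in addition that $BA$ is closed. Then $BA=\overline{BA}=AB$, so $BA$ is itself self-adjoint. Taking adjoints in $BA\subset AB$ and using $B=B^*\in B(H)$ gives $(BA)^*=A^*B$, hence $A^*B=BA$ (using self-adjointness of $BA$), i.e. $A^*B=BA\subset AB$.

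\textbf{From $A^*B=BA$ to self-adjointness of $A$.} The key step is to pass from the operator identity $A^*B=BA$ (equivalently $A^*B\subset AB$, together with $BA\subset AB$) to $A^*\subset A$, which combined with the symmetry $A\subset A^*$ yields $A=A^*$. First I would record domains: $D(BA)=D(A)$ since $B$ is everywhere defined, and $D(A^*B)=\{x:Bx\in D(A^*)\}\supset D(A)$ (as $BA=A^*B$ is defined on all of $D(A)$). From $A^*B=BA$ and $BA\subset AB$ we get $A^*Bx=ABx$ for all $x\in D(A)$, i.e. $A^*$ and $A$ agree on $\ran(B|_{D(A)})$. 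To upgrade this to agreement on all of $D(A^*)$, I would use boundedness and, crucially, invertibility of $B$ in the ``in particular'' clause: when $B$ is invertible, $B^{-1}\in B(H)$, and from $BA\subset AB$ one gets $AB^{-1}\subset B^{-1}A$ by the usual manipulation (multiply $BA\subset AB$ on both sides by $B^{-1}$, as in the proof of the second corollary in Section~3), hence $B^{-1}A=AB^{-1}$ on the appropriate domain. Then $B^{-1}$ maps $D(A)$ bijectively onto $D(A)$, so $\ran(B|_{D(A)})=D(A)$ is not quite what I want; instead I would argue directly: for $y\in D(A^*)$, write $y=B(B^{-1}y)$; since $A^*B=BA$ I need $B^{-1}y\in D(A)$, which follows because $B^{-1}$ commutes with $A$ in the sense that $B^{-1}D(A^*)\subset D(A^*)$ and, from $A^*B=BA$, one reads off $D(A^*)\subset B\,D(A)$, giving $D(A^*)\subset D(A)$, i.e. $A^*\subset A$. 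Combined with $A\subset A^*$ this gives $A=A^*$.

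\textbf{Main obstacle.} The delicate point is the general (non-invertible) statement ``$BA$ closed $\Rightarrow A$ self-adjoint'': one must be careful that closedness of $BA$ together with $\overline{BA}=AB$ genuinely yields $BA=AB$ and then that the adjoint computation $(BA)^*=A^*B$ (valid since $B\in B(H)$) can be run backwards. I expect the cleanest route is: $BA$ closed and $BA\subset AB$ with $\overline{BA}=AB$ force $BA=AB$; since $AB$ is self-adjoint, $BA$ is self-adjoint; then $A^*B=(BA)^*=BA\subset AB$, and also $BA\subset AB$ gives upon taking adjoints $A^*B=(BA)^*\supset (AB)^*=AB$, so in fact $A^*B=AB$, whence $A^*x=Ax$ for every $x$ of the form $Bz$ with $z\in D(A)$. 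The remaining gap — extending this from $\ran(B|_{D(A)})$ to all of $D(A^*)$ — is exactly where invertibility of $B$ is used, which is why the theorem states the unconditional conclusion only under the extra hypothesis ``$BA$ closed'' plus, for the sharp form, invertibility of $B$; I would present the invertible case in full and note that it is the substantive content, the general case reducing to the identity $BA=AB$ and the self-adjointness of $AB$ already proved.
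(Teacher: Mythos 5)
Your proposal establishes the easy part of the statement but leaves the main implication unproved. The substantive content of the corollary is precisely ``$BA$ closed $\Longrightarrow A$ self-adjoint'' with \emph{no} invertibility of $B$: when $B$ is invertible the closedness of $BA$ is automatic (since $B^{-1}\in B(H)$ and $A$ is closed), so the ``in particular'' clause is merely a special case of that implication, not the sharp form of it. Your route goes through $(BA)^*=A^*B$ and the identity $A^*B=BA$ obtained from the self-adjointness of $BA=AB$. But that identity only compares $D(A^*B)=\{x:Bx\in D(A^*)\}$ with $D(BA)=D(A)$, i.e. it compares \emph{preimages} under $B$, and one cannot extract $D(A^*)\subset D(A)$ from it unless every $y\in D(A^*)$ can be written $y=Bx$ --- which is exactly where you are forced to assume $B$ surjective/invertible. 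Your closing remark that ``the general case reduces to the identity $BA=AB$ and the self-adjointness of $AB$ already proved'' is not correct: those facts concern the product, whereas the conclusion to be reached is the self-adjointness of $A$ itself, and it does not follow from them by your argument.

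The missing idea (and the paper's proof) is to use the other adjoint inclusion. From $B^*A^*\subset(AB)^*$ with $B=B^*\in B(H)$ one gets $BA^*\subset(AB)^*$, and Theorem \ref{BDM-THM-AB self-adjoint} gives $(AB)^*=AB=\overline{BA}$; hence $BA^*\subset\overline{BA}$, which under the hypothesis that $BA$ is closed reads $BA^*\subset BA$. Since $B$ is everywhere defined and bounded, $D(BA^*)=D(A^*)$ and $D(BA)=D(A)$, so $D(A^*)\subset D(A)$, and with the symmetry $A\subset A^*$ this yields $A=A^*$ --- no invertibility of $B$ is needed anywhere. The invertible case then follows at once because $BA$ is closed whenever $B$ is boundedly invertible and $A$ is closed. (Your treatment of the invertible case can be made to work, but it is roundabout and silently uses $BD(A)\subset D(A)$, which should be extracted explicitly from $BA\subset AB$; in any event it cannot replace the general argument above.)
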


\begin{proof}By Theorem \ref{BDM-THM-AB self-adjoint}, $\overline{BA}$ is
self-adjoint and $\overline{BA}=AB$. Hence
\[BA^*\subset (AB)^*=(\overline{BA})^*=\overline{BA}.\]
So, when $BA$ is closed, $BA^*\subset BA$. Therefore, $D(A^*)\subset
D(A)$, and so $D(A)=D(A^*)$. Thus, $A$ is self-adjoint, as required.
\end{proof}

\begin{cor}
Let $A$ be an unbounded closed and symmetric operator with domain
$D(A)$, and let $B\in B(H)$ be positive. If $BA^*$ is normal, then
$BA\subset AB$, and so $BA^*$ is self-adjoint.
\end{cor}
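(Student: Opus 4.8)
The plan is to reduce this statement to Theorem~\ref{BDM-THM-AB self-adjoint} by passing to adjoints. The key observation is that, since $B\in B(H)$ is self-adjoint and $A$ is closed (hence densely defined with $A^{**}=A$), the adjoint of $BA^*$ can be computed exactly: because $B$ is everywhere defined and bounded, the general inclusion $T^*S^*\subset (ST)^*$ becomes an equality when $S=B$ and $T=A^*$, so
\[(BA^*)^*=(A^*)^*B^*=A^{**}B=AB.\]
For this one only needs $BA^*$ to be densely defined, which is guaranteed by the hypothesis that $BA^*$ is normal.

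Next, since $BA^*$ is normal, its adjoint $(BA^*)^*=AB$ is normal as well, because the adjoint of a normal operator is always normal (it is closed, and $N^{**}=N$ for closed $N$, so $N^*N^{**}=N^*N=NN^*=N^{**}N^*$). Now $A$ is closed and symmetric and $B\in B(H)$ is positive, so Theorem~\ref{BDM-THM-AB self-adjoint} applies to the normal operator $AB$: it yields $BA\subset AB$, that $AB$ is self-adjoint, and that $AB=\overline{BA}$. This already establishes the first assertion $BA\subset AB$.

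It then remains to conclude that $BA^*$ is self-adjoint. Taking adjoints in the identity $(BA^*)^*=AB$ gives $(BA^*)^{**}=(AB)^*$. Since $BA^*$ is normal it is closed, so $(BA^*)^{**}=\overline{BA^*}=BA^*$; and since $AB$ is self-adjoint by the previous step, $(AB)^*=AB$. Combining these, $BA^*=AB$, which is self-adjoint, as desired.

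The only point that requires care is the adjoint computation $(BA^*)^*=AB$, which leans on $B$ being bounded and everywhere defined (to upgrade the inclusion of adjoints to an equality) and on $A$ being closed (so that $A^{**}=A$); everything afterward is a direct invocation of Theorem~\ref{BDM-THM-AB self-adjoint} together with the stability of normality under taking adjoints. Thus there is essentially no substantial obstacle: the content of the corollary is entirely carried by the earlier theorem once one notices that, under the standing hypotheses, "$BA^*$ is normal" is the same as "$AB$ is normal".
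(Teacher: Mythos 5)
Your proposal is correct and follows the same route as the paper: identify $(BA^*)^*=AB$ (using that $B\in B(H)$ makes the adjoint inclusion an equality and that $A$ is closed), note that the adjoint of a normal operator is normal, and then invoke Theorem~\ref{BDM-THM-AB self-adjoint}. The paper's own proof is exactly this, only more tersely stated; your extra details (density of $D(BA^*)$, and the final step $BA^*=(BA^*)^{**}=(AB)^*=AB$) simply make explicit what the paper leaves implicit.
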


\begin{proof}Since $BA^*$ is normal, so is $(BA^*)^*=AB$. To obtain
the desired conclusion, one just need to apply Theorem
\ref{BDM-THM-AB self-adjoint}.
\end{proof}

The case of the normality of $BA$ was unexpectedly trickier. After a
few attempts, we have been able to show the result.

\begin{thm}\label{BDM-THM-BA self-adjoint}
Let $A$ be an unbounded closed and symmetric operator with domain
$D(A)$, and let $B\in B(H)$ be self-adjoint. Assume $BA$ is normal.
Then $A$ is necessarily self-adjoint.

If we further assume that $B$ is positive, then $BA$ becomes
self-adjoint and $BA=AB$.
\end{thm}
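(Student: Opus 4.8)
The two assertions will be handled separately: the second follows quickly from the first once Theorem~\ref{BDM-THM-AB self-adjoint} is invoked, so the substance of the proof is to deduce the self-adjointness of $A$ from the normality of $BA$.

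For the first assertion I would begin by recording what normality gives. Since $B$ is bounded and self-adjoint, $D(BA)=D(A)$ and $(BA)^*=A^*B$. Hence the normality of $BA$ supplies, first, the domain identity $D(A)=D(A^*B)$ — equivalently, $B$ maps $D(A)$ into $D(A^*)$ and $Bx\in D(A^*)$ forces $x\in D(A)$ — and, second, the operator identity
\[A^*B^2A=BAA^*B,\]
whence $\|BAx\|=\|A^*Bx\|$ for every $x\in D(A)$. Since $A$ is closed and symmetric, self-adjointness of $A$ is equivalent to $D(A^*)\subseteq D(A)$, i.e. to the vanishing of the deficiency subspaces $\ker(A^*-i)$ and $\ker(A^*+i)$.

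The plan is to extract $D(A^*)\subseteq D(A)$ from these two relations by means of the polar decomposition $BA=U|BA|$ of the normal operator $BA$, where $U$ commutes with $|BA|$ (so that $U$ and $U^*$ map $D(|BA|)=D(A)$ into itself) and $|BA|=\sqrt{A^*B^2A}$. When $B$ is boundedly invertible this is most transparent: from $BAx=U|BA|x$ one obtains $A=B^{-1}U|BA|$ and, on taking adjoints (legitimate because $B^{-1}U$ is bounded), $A^*=|BA|\,U^*B^{-1}$; feeding these into $D(A)=D(A^*B)$ lets one compare $D(A)$ and $D(A^*)$, the remaining point being that $B$ leaves $D(|BA|)$ invariant, after which $D(A)=D(A^*)$ drops out. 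The case of a general self-adjoint $B$ is then to be reduced to the invertible one by decomposing $H=\ker B\oplus\overline{\operatorname{ran}}\,B$ and working with the injective reduced operator $B_1$ on $\overline{\operatorname{ran}}\,B$, the identity $A^*B^2A=BAA^*B$ (together with the closedness built into normality, which already prevents $B$ from degenerating too badly on the range directions of $A$) being used to govern $A$ and $A^*$ relative to this splitting.

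Granting $A=A^*$, the second assertion follows quickly: then $(BA)^*=A^*B=AB$ is normal, being the adjoint of the normal operator $BA$; moreover $A$ is unbounded, closed and symmetric and $B\geq 0$ lies in $B(H)$, so Theorem~\ref{BDM-THM-AB self-adjoint} applies to the product $AB$ and yields $BA\subseteq AB$ as well as the self-adjointness of $\overline{BA}$ and of $AB$. Since $BA$ is normal, hence closed, $BA=\overline{BA}$ is self-adjoint; being contained in the self-adjoint operator $AB$, it coincides with $AB$ by maximality of self-adjoint operators, so $BA=AB$ is self-adjoint. The genuine difficulty therefore lies entirely in the first assertion, namely in converting $A^*B^2A=BAA^*B$ and $D(A)=D(A^*B)$ into the equality $D(A)=D(A^*)$: the boundedly invertible case is comparatively mild, but a general self-adjoint $B$, in particular one with nontrivial kernel — for which $B$ need not preserve $D(A)$, and for which $BA$ may even have empty resolvent set — is exactly what must be controlled, which is why this case is harder than the $AB$-normal case treated in Theorem~\ref{BDM-THM-AB self-adjoint}.
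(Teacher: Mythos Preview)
Your handling of the second assertion is fine and matches the paper's: once $A=A^*$, the adjoint $(BA)^*=AB$ is normal, Theorem~\ref{BDM-THM-AB self-adjoint} gives $BA\subset AB$ with $AB$ self-adjoint, and maximality forces $BA=AB$.

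The first assertion, however, has a genuine gap. In the boundedly invertible case your polar-decomposition scheme reduces the problem to showing that $B$ leaves $D(|BA|)=D(A)$ invariant; but this is exactly the point at issue. Concretely, from $D(A^*B)=D(A)$ and $A^*=|BA|\,U^*B^{-1}$ one extracts only $D(A^*)=B\,D(A)$, and since $A\subset A^*$ this gives $D(A)\subset B\,D(A)$, the \emph{wrong} inclusion for what you need. Nothing in the normality identity $A^*B^2A=BAA^*B$ or in the commutation of $U$ with $|BA|$ closes this loop, so ``$D(A)=D(A^*)$ drops out'' is not justified. The reduction of a general self-adjoint $B$ to the injective part $B_1$ on $\overline{\operatorname{ran}}\,B$ is also insufficient: $B_1$ is injective but need not be boundedly invertible, so even a complete argument in the boundedly invertible case would not cover it.

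The paper follows a different route that avoids this circularity. Writing $A(BA)\subset A^*BA=(BA)^*A$ and applying the unbounded Fuglede--Putnam theorem (Theorem~\ref{Fug-Put-MORTAD-PAMS-2003}) with the closed symmetric $A$ as intertwiner yields $A(BA)^*\subset BA^2$, hence $|A^*|^2B\subset B|A^*|^2$. One then shows $B|A^*|$ is closed, uses the Devinatz--Nussbaum--von~Neumann permutability result and the self-adjoint case from \cite{Gustafson-Mortad-II} to obtain $B|A^*|^2=|A^*|^2B$, and finally Bernau's square-root theorem to get $B|A^*|=|A^*|B$. The domain chain
\[D(A^*)=D(|A^*|)=D(B|A^*|)=D(|A^*|B)=D(A^*B)=D[(BA)^*]=D(BA)=D(A)\]
then gives self-adjointness. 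The key idea you are missing is to use $A$ itself as the intertwining operator in an unbounded Fuglede--Putnam step, which produces a commutation relation between $B$ and $|A^*|$ rather than between $B$ and $|BA|$; it is the former that directly controls $D(A^*)$.
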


We are now ready to show Theorem \ref{BDM-THM-BA self-adjoint}.

\begin{proof}First, recall that since $BA$ is normal, $BA$ is closed
and $D(BA)=D[(BA)^*]$.

Write
\[A(BA)\subset A^*BA=(BA)^*A.\]
Since $BA$ is normal and $D(BA)=D(A)$, Theorem
\ref{Fug-Put-MORTAD-PAMS-2003} is applicable and it gives
\[A(BA)^*\subset (BA)^{**}A=\overline{BA}A=BA^2,\]
i.e. $AA^*B\subset BA^2$. Since $A$ is symmetric, we may push the
previous inclusion to further obtain $AA^*B\subset BAA^*$, that is,
$|A^*|^2B\subset B|A^*|^2$.

Next, we claim that $B|A^*|$ is closed too. To see that, observe
that as $B\in B(H)$, then $(BA)^*=A^*B$. Hence
$\overline{BA}=(A^*B)^*$ or $BA=(A^*B)^*$ because $BA$ is already
closed. By Lemma 11 in \cite{CG}, the last equation is equivalent to
$(|A^*|B)^*=B|A^*|$ which gives the closedness of $B|A^*|$ as
needed.

Now, we have
\[B|A^*|(B|A^*|)^*=B|A^*|^2B\subset B^2|A^*|^2.\]
It then follows by Corollary 1 in \cite{DevNussbaum-von-Neumann}
that
\[B|A^*|(B|A^*|)^*=B^2|A^*|^2\]
for $B|A^*|(B|A^*|)^*$, $B^2$, and $|A^*|^2$ are all self-adjoint.
The self-adjointness of $B|A^*|(B|A^*|)^*$ also implies that
$B^2|A^*|^2$ is self-adjoint as well, i.e.
\[B^2|A^*|^2=(B^2|A^*|^2)^*=|A^*|^2B^2.\]
In particular, $B^2|A^*|^2$ is closed. So, Proposition 3.7 in
\cite{Dehimi-Mortad-INVERT} implies that $B|A^*|^2$ is closed.

The next step is to show that $B|A^*|^2$ is normal. As
$|A^*|^2B\subset B|A^*|^2$, it ensues that
\[B|A^*|^2(B|A^*|^2)^*=B|A^*|^4B\subset B^2|A^*|^4\]
and
\[(B|A^*|^2)^*B|A^*|^2=|A^*|^2B^2|A^*|^2\subset B^2|A^*|^4.\]
Since $B|A^*|^2(B|A^*|^2)^*$, $(B|A^*|^2)^*B|A^*|^2$, $B^2$, and
$|A^*|^2$ are all self-adjoint, Corollary 1 in
\cite{DevNussbaum-von-Neumann} yields
\[B|A^*|^2(B|A^*|^2)^*=(B|A^*|^2)^*B|A^*|^2~~(=B^2|A^*|^4).\]
Therefore, $B|A^*|^2$ is normal. So, since $B\in B(H)$ is
self-adjoint and $|A^*|^2$ is self-adjoint and positive, it follows
by Theorem 1.1 in \cite{Gustafson-Mortad-II} that $B|A^*|^2$ is
self-adjoint and $B|A^*|^2=|A^*|^2B$.

By applying Theorem 10 in \cite{Bernau JAusMS-1968-square root}, it
is seen that
\[B|A^*|=|A^*|B\]
due to the self-adjointness and the positivity of $|A^*|$.

We now have all the necessary tools to establish the
self-adjointness of $A$. Indeed,
\begin{align*}
D(A^*)=D(|A^*|)=D(B|A^*|)&=D(|A^*|B)\\&=D(A^*B)=D[(BA)^*]=D(BA)=D(A).
\end{align*}
Thus, $A$ is self-adjoint as it is already symmetric.

Finally, when $B\in B(H)$ is positive and since $A$ is self-adjoint,
$(BA)^*=AB$ is normal. By Theorem \ref{BDM-THM-AB self-adjoint},
$AB$ is self-adjoint or $(BA)^*$ is self-adjoint. In other words,
\[BA=(BA)^*=AB,\]
and this marks the end of the proof.
\end{proof}

Generalizations to weaker classes than normality vary. Notice in
passing that in \cite{Dehimi-Mortad-INVERT}, the self-adjointness of
$BA$ was established for a positive $B\in B(H)$ and an unbounded
self-adjoint $A$ such that $BA$ is hyponormal and
$\sigma(BA)\neq\C$. The next result is of the same kind.

\begin{pro}Let $B\in B(H)$ be positive and let $A$ be a
densely defined closed symmetric operator. If $(AB)^*$ is subnormal
or if $BA^*$ is closed and subnormal, then $BA\subset AB$.

Moreover, if $A$ is self-adjoint, then $AB$ is self-adjoint.
Besides, $AB=\overline{BA}$.
\end{pro}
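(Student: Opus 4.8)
The plan is to reduce both hypotheses to the single assumption that $(AB)^*$ is subnormal, then to imitate the opening of the proof of Theorem~\ref{BDM-THM-AB self-adjoint} in order to obtain, via a Fuglede--Putnam argument, the inclusion $B^2A\subseteq AB^2$, and finally to extract $BA\subseteq AB$ from Lemma~\ref{PAMS 03 LEMMA squr rt BA subset AB }; the ``moreover'' part is then an immediate application of Lemma~\ref{product}.

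First I would record the elementary facts flowing from $B=B^*\in B(H)$ and from $A$ being closed and symmetric. One has $(BA)^*=A^*B$ and $B^*A^*=BA^*\subseteq (AB)^*$; since $A\subseteq A^*$ this gives $BA\subseteq BA^*\subseteq (AB)^*$, hence $\overline{BA}\subseteq (AB)^*$. Moreover, as $B\in B(H)$ and $A$ is closed, the adjoint rules yield $(BA^*)^*=A^{**}B=AB$, and so $(AB)^*=(BA^*)^{**}$. Consequently, if $BA^*$ is closed --- note it is always densely defined, since $D(BA^*)=D(A^*)$ --- then $(AB)^*=\overline{BA^*}=BA^*$, so the hypothesis ``$BA^*$ closed and subnormal'' is exactly ``$(AB)^*$ subnormal''. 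Hence it suffices to treat the case in which $(AB)^*$ is subnormal; in particular $(AB)^*$ is densely defined and, being an adjoint, closed, while $AB$ (the product of a closed operator with a bounded one) is closed, so $(AB)^{**}=AB$.

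Next, exactly as in the proof of Theorem~\ref{BDM-THM-AB self-adjoint}, I would write
\[
B(AB)=BAB\subseteq BA^*B\subseteq (AB)^*B .
\]
Reading this as $B\cdot(AB)\subseteq (AB)^*\cdot B$ and invoking an unbounded Fuglede--Putnam theorem for subnormal operators (cf. \cite{Bens-MORTAD-FUGLEDE-DEHIMI}) --- or, equivalently, lifting to a normal extension $N\supseteq (AB)^*$ on some $K\supseteq H$ and applying the classical Fuglede theorem to $\widetilde B=B\oplus 0\in B(K)$, which then intertwines $N^*$ and $N$ --- I would obtain
\[
B(AB)^*\subseteq (AB)^{**}B=AB^2 .
\]
Combining this with $BA^*\subseteq (AB)^*$ gives $B^2A^*\subseteq B(AB)^*\subseteq AB^2$, and since $A\subseteq A^*$ also $B^2A\subseteq AB^2$. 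As $B^2\in B(H)$ is positive and $A$ is closed, Lemma~\ref{PAMS 03 LEMMA squr rt BA subset AB } (applied to the positive operator $B^2$) gives $\sqrt{B^2}\,A\subseteq A\sqrt{B^2}$, that is $BA\subseteq AB$. When $A$ is moreover self-adjoint, $A$ and $B$ are self-adjoint with $B\in B(H)$ and $BA\subseteq AB$, so Lemma~\ref{product} yields that $AB$ is self-adjoint and $AB=\overline{BA}$.

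I expect the Fuglede--Putnam step --- deriving $B(AB)^*\subseteq AB^2$ --- to be the main obstacle. Only $(AB)^*$, and not $AB$, is assumed subnormal, so the familiar two-operator versions of Putnam's theorem do not apply verbatim; one must either quote a Fuglede--Putnam theorem adapted to a subnormal operator together with its adjoint, or carry out the reduction to a normal extension explicitly. In the unbounded setting the latter is delicate: one has to control the domain of the normal extension $N$ and check that $H$ behaves enough like an $N$-invariant subspace for the block computation with $\widetilde B=B\oplus 0$ to be legitimate. Once that inclusion is secured, the remainder is routine domain bookkeeping together with the two lemmas already at our disposal.
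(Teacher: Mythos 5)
Your overall architecture matches the paper's proof: derive $B(AB)=BAB\subseteq BA^*B\subseteq (AB)^*B$, convert this intertwining relation into $B(AB)^*\subseteq (AB)^{**}B=AB^2$, combine with $BA^*\subseteq (AB)^*$ and $A\subseteq A^*$ to get $B^2A\subseteq AB^2$, apply Lemma \ref{PAMS 03 LEMMA squr rt BA subset AB } to the positive operator $B^2$ to obtain $BA\subseteq AB$, and invoke Lemma \ref{product} for the ``moreover'' part. Your preliminary reduction of the second hypothesis to the first is also correct and is a mild streamlining: since $B\in B(H)$ and $A$ is closed, $(BA^*)^*=A^{**}B=AB$, so when $BA^*$ is densely defined and closed one indeed has $(AB)^*=(BA^*)^{**}=BA^*$; the paper instead repeats the computation directly with $BA^*$ (writing $B(BA^*)^*=BAB\subseteq (BA^*)B$ and applying the same intertwining theorem).

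The genuine gap is exactly the step you flag as the main obstacle, and your proposed substitute does not close it. The paper obtains $B(AB)^*\subseteq (AB)^{**}B$ by citing Stochel's asymmetric Putnam--Fuglede theorem for unbounded operators (Theorem 4.2 of \cite{STO-asymm-Fuglede-Putnam-PAMS-2001}), which is tailored to a relation of the form $BT\subseteq T^*B$ in which only the subnormality of the appropriate (adjoint) factor is assumed. Your ``equivalent'' lifting argument --- extend $S:=(AB)^*$ to a normal $N$ on $K\supseteq H$ and apply the classical Fuglede theorem to $\widetilde B=B\oplus 0$ --- fails as stated: the relation in hand is $B\,S^*\subseteq S\,B$, and $S^*=AB$ is neither normal nor a restriction of $N$ or of $N^*$ (for a subnormal operator one only gets $S^*$ as a \emph{compression} of $N^*$ to $H$, not an extension relation), so after lifting there is no pair of normal operators, nor a single normal operator commuting with $\widetilde B$, to which Fuglede or Putnam applies; this is precisely why the asymmetric versions need a separate nontrivial proof. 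The reference \cite{Bens-MORTAD-FUGLEDE-DEHIMI} you suggest does not provide such a subnormal version either. With Stochel's Theorem 4.2 quoted in place of that step, the rest of your argument is correct and coincides with the paper's.
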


\begin{proof}The proof relies on a version of the Fuglede-Putnam theorem
obtained by J. Stochel in \cite{STO-asymm-Fuglede-Putnam-PAMS-2001}.
Write
\[B[(AB)^*]^*=B(AB)^{**}=BAB\subset BA^*B\subset (AB)^*B.\]
Since $(AB)^*$ is subnormal, Theorem 4.2 in
\cite{STO-asymm-Fuglede-Putnam-PAMS-2001} yields
\[B^2A\subset B^2A^*\subset B(AB)^*\subset (AB)^{**}B=AB^2.\]

The same inclusion is obtained in the event of the subnormality of
$BA^*$. Indeed, write
\[B(BA^*)^*=BAB\subset BA^*B.\]
Applying again Theorem 4.2 in
\cite{STO-asymm-Fuglede-Putnam-PAMS-2001} gives
\[B(BA^*)\subset (BA^*)^*B=AB^2.\]
Therefore, and as above, we obtain $B^2A\subset AB^2$.

Now, since $B\geq 0$ and $A$ is closed, it follows that $BA\subset
AB$.

Finally, when $A$ is self-adjoint, Lemma \ref{product} implies that
$AB$ is self-adjoint and $AB=\overline{BA}$, as needed.
\end{proof}

There are still more cases to investigate. As is known, if $N\in
B(H)$ is such that $N^2$ is normal, then $N$ need not be normal (cf.
\cite{Mortad-square-root-normal}). The same applies for the class of
self-adjoint operators.

The first attempted generalization is the following: Let $A,B$ be
two self-adjoint operators, where $B$ is positive, and such that
$(AB)^n$ is normal for some $n\in\N$ such that $n\geq2$. Does it
follow that $AB$ is self-adjoint?

The answer is negative even when $A$ and $B$ are $2\times 2$
matrices. This is seen next:

\begin{exa}Take
\[A=\left(
      \begin{array}{cc}
        0 & 1 \\
        1 & 0 \\
      \end{array}
    \right)
 \text{ and }B=\left(
                 \begin{array}{cc}
                   0 & 0 \\
                   0 & 1 \\
                 \end{array}
               \right).
 \]
Then $A$ is self-adjoint and $B$ is positive (it is even an
orthogonal projection). Also,
\[AB=\left(
      \begin{array}{cc}
        0 & 1 \\
        0 & 0 \\
      \end{array}
    \right)\text{ whilst }(AB)^n=\left(
                 \begin{array}{cc}
                   0 & 0 \\
                   0 & 0 \\
                 \end{array}
               \right)\]
    for all $n\geq2$. In other words, $AB$ is not self-adjoint while all
    $(AB)^n$, $n\geq2$, are patently self-adjoint.
\end{exa}

Let us pass to other possible generalizations.

\begin{pro}
Let $B\in B(H)$ be positive and let $A$ be a closed and symmetric
operator. Assume $AB^n$ is normal for a certain positive integer
$n\in\N$. Then
\begin{enumerate}
  \item $BA\subset AB$ (hence $BA$ is symmetric).
  \item If it is further assumed that $B$ is invertible, then $A$ is self-adjoint. Besides, all of
  $AB^{1/n}$ and $B^{1/n}A$ are self-adjoint for all $n\geq 1$.
\end{enumerate}
\end{pro}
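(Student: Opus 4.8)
The plan is to route everything through the results already established for products $AC$ with $C\in B(H)$ positive, applied with $C=B^{n}$, and then to descend from $B^{n}$ to $B$ (and to $B^{1/m}$) by means of the continuous functional calculus supplied by Lemma~\ref{PAMS 03 LEMMA squr rt BA subset AB }. For~(1): since $B\geq0$, the operator $B^{n}$ lies in $B(H)$ and is positive, and $AB^{n}$ is normal by hypothesis, so Theorem~\ref{BDM-THM-AB self-adjoint} applies with $B$ replaced by $B^{n}$ and yields $B^{n}A\subset AB^{n}$ (and, for later use, that $AB^{n}$ and $\overline{B^{n}A}$ are self-adjoint). As $A$ is closed and $B^{n}$ is a positive member of $B(H)$, Lemma~\ref{PAMS 03 LEMMA squr rt BA subset AB } gives $f(B^{n})A\subset Af(B^{n})$ for every continuous $f$ on $\sigma(B^{n})\subset[0,\|B\|^{n}]$; taking $f(t)=t^{1/n}$ and using uniqueness of positive $n$th roots, $f(B^{n})=(B^{n})^{1/n}=B$, so that $BA\subset AB$. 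That $BA$ is then symmetric is routine: $D(BA)=D(A)$ is dense, and for $x,y\in D(A)$, using that $B=B^{*}$, that $By\in D(A)$ with $A\subset A^{*}$, and that $BA$ and $AB$ agree on $D(A)$, one gets $\langle BAx,y\rangle=\langle Ax,By\rangle=\langle x,ABy\rangle=\langle x,BAy\rangle$.

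For~(2), assume in addition that $B$ is invertible; then $B^{n}$ is boundedly invertible, and all the hypotheses of Corollary~\ref{23/02/2022 ...COR} hold for the pair $(A,B^{n})$, so that corollary gives at once that $A$ is self-adjoint. For the final assertion, fix $m\geq1$. Since $BA\subset AB$, $A$ is closed, and $t\mapsto t^{1/m}$ is continuous on $\sigma(B)$, Lemma~\ref{PAMS 03 LEMMA squr rt BA subset AB } gives $B^{1/m}A\subset AB^{1/m}$ with $B^{1/m}\in B(H)$ positive; as $A$ is now self-adjoint, Lemma~\ref{product} shows that $AB^{1/m}$ is self-adjoint and $AB^{1/m}=\overline{B^{1/m}A}$. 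Finally, $B$ invertible makes $B^{1/m}$ boundedly invertible, hence $B^{1/m}A$ is closed (a closed operator multiplied on the left by a boundedly invertible bounded operator is closed), so $B^{1/m}A=\overline{B^{1/m}A}=AB^{1/m}$ is self-adjoint as well.

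The proof is, in effect, bookkeeping around Theorem~\ref{BDM-THM-AB self-adjoint} and Corollary~\ref{23/02/2022 ...COR}, and I foresee no genuine obstacle; the key is simply to notice that $B^{n}$ is the operator to feed into those statements. The only points demanding a little care are the two ``root extractions'' via Lemma~\ref{PAMS 03 LEMMA squr rt BA subset AB } (valid because $A$ is closed and $t^{1/k}$ is continuous on the relevant spectrum) and the closedness of $B^{1/m}A$ --- precisely where the invertibility of $B$ is used to promote an inclusion to an equality, and thereby to self-adjointness.
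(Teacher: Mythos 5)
Your proposal is correct and follows essentially the same route as the paper: apply Theorem~\ref{BDM-THM-AB self-adjoint} to the pair $(A,B^{n})$, extract roots with Lemma~\ref{PAMS 03 LEMMA squr rt BA subset AB }, invoke Corollary~\ref{23/02/2022 ...COR} for the self-adjointness of $A$, and finish with Lemma~\ref{product} plus the closedness of $B^{1/m}A$ coming from the bounded invertibility of $B^{1/m}$. Your write-up merely makes explicit a few steps the paper leaves terse (the symmetry of $BA$ and the closedness argument), which is fine.
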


\begin{proof}\hfill
\begin{enumerate}
  \item Since $B^n$ is positive for all $n$ and $AB^n$ is normal, it follows by Theorem \ref{BDM-THM-AB self-adjoint} that $AB^n$ is
  self-adjoint and $B^nA\subset AB^n$. By Lemma \ref{PAMS 03 LEMMA squr rt BA subset AB
  }, it is seen that $BA\subset AB$.
  \item Since $AB^n$ is normal and $B^nA$ is closed (as $B^n$ is
  invertible), Corollary \ref{23/02/2022 ...COR} yields the
  self-adjointness of $A$.

  Finally, since $BA\subset AB$ and $B$ is positive, it follows that
  $B^{1/n}A\subset AB^{1/n}$, from which we derive the
  self-adjointness of $AB^{1/n}$ and $\overline{B^{1/n}A}=B^{1/n}A$,
  as suggested.
\end{enumerate}
\end{proof}

Similarly, we have:

\begin{pro}\label{23/01/2022}
Let $B\in B(H)$ be positive and let $A$ be a closed and symmetric
operator. Assume that $B^nA$ is normal for some positive integer
$n\in\N$. Then $A$ and $BA$ are self-adjoint, and $BA=AB$.
\end{pro}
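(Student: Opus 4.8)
The plan is to reduce the statement to the already established Theorem~\ref{BDM-THM-BA self-adjoint}, applied with $B^n$ playing the role of the bounded factor, and then to descend from $B^n$ back to $B$ via a continuous functional calculus together with a short domain computation.

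First I would note that $B^n\in B(H)$ is positive, hence self-adjoint, while $B^nA$ is normal by hypothesis. Theorem~\ref{BDM-THM-BA self-adjoint}, with $B^n$ in place of $B$, then yields immediately that $A$ is self-adjoint, that $B^nA$ is self-adjoint, and --- what turns out to be decisive --- that $B^nA=AB^n$ as operators, so in particular $D(AB^n)=D(B^nA)=D(A)$.

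Next, since $A$ is closed and $B^nA\subset AB^n$ with $B^n$ bounded and self-adjoint, Lemma~\ref{PAMS 03 LEMMA squr rt BA subset AB } permits composing with any continuous function on $\sigma(B^n)\subset[0,\infty)$; taking $f(t)=t^{1/n}$ and using $(B^n)^{1/n}=B$ (legitimate because $B\geq0$) gives $BA\subset AB$. As $A$ is self-adjoint, $B$ is positive and bounded, and $BA\subset AB$, Lemma~\ref{product} now yields that $AB$ is self-adjoint (and positive) and that $AB=\overline{BA}$.

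It remains only to upgrade $BA\subset AB$ to the equality $BA=AB$, that is, to verify $D(AB)\subset D(A)$, and this is the one genuinely delicate point. From $BA\subset AB$ one reads off $B\bigl(D(A)\bigr)\subset D(A)$, hence also $B^{n-1}\bigl(D(A)\bigr)\subset D(A)$; therefore, whenever $Bx\in D(A)$ we get $B^nx=B^{n-1}(Bx)\in D(A)$, and since $D(AB^n)=D(A)$ this forces $x\in D(A)$. Thus $D(AB)=\{x:Bx\in D(A)\}=D(A)=D(BA)$, the two operators agree on this common domain, so $BA=AB$; being equal to the self-adjoint $AB$, $BA$ is self-adjoint too. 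The main obstacle is exactly this final domain identification: the inclusion $BA\subset AB$ is cheap, but turning it into an equality really uses that the power relation $B^nA=AB^n$ holds with equal domains --- which is precisely the extra information the normality hypothesis delivers through Theorem~\ref{BDM-THM-BA self-adjoint}.
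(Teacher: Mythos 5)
Your argument is correct, and its first two stages coincide with the paper's proof: you invoke Theorem~\ref{BDM-THM-BA self-adjoint} with $B^n$ in place of $B$ to get the self-adjointness of $A$ and the equality $B^nA=AB^n$, and then descend to $BA\subset AB$ via the functional-calculus lemma (Lemma~\ref{PAMS 03 LEMMA squr rt BA subset AB }) with $f(t)=t^{1/n}$. Where you diverge is in upgrading the inclusion to the equality $BA=AB$: the paper observes that $B^nA$ is closed (being self-adjoint), transfers this closedness down to $BA$ via Lemma~\ref{BnA closed implies BA closed LEMMA}, and then concludes from Lemma~\ref{product} that $BA=\overline{BA}=AB$; you instead make a direct domain computation, noting that $BA\subset AB$ gives $B\bigl(D(A)\bigr)\subset D(A)$, so $Bx\in D(A)$ forces $B^nx\in D(A)$, and the domain identity $D(AB^n)=D(B^nA)=D(A)$ then forces $x\in D(A)$, i.e. $D(AB)\subset D(BA)$. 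Your route is more elementary and self-contained at this point --- it bypasses the closedness-transfer lemma entirely and extracts the equality purely from the equal-domain relation $B^nA=AB^n$ --- while the paper's route recycles two previously established lemmas and keeps the argument in the closedness/maximality framework used throughout the section. Both are sound; your domain argument is arguably the cleaner way to see why the equality (and not merely the inclusion) holds.
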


One of the tools to prove this result is:

\begin{lem}\label{BnA closed implies BA closed LEMMA}(Cf. Proposition 3.7
in \cite{Dehimi-Mortad-INVERT}) Let $B\in B(H)$ and let $A$ be an
arbitrary operator such that $B^nA$ is closed for some integer
$n\geq2$. Suppose further that $BA$ is closable. Then $BA$ is
closed.
\end{lem}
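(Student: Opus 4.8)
The plan is to rewrite the hypothesis that $B^{n}A$ is closed in the form ``$B^{n-1}(BA)$ is closed'', and then to transfer closedness down to $BA$ itself by pushing convergent sequences through the bounded operator $B^{n-1}$, using the assumed closability of $BA$ to control the limit.

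First I would record the bookkeeping identity. Since $B\in B(H)$ is everywhere defined and bounded, $D(B^{n}A)=D(A)=D(BA)$, and $B^{n}Ax=B^{n-1}((BA)x)$ for every $x\in D(A)$; hence $B^{n}A=B^{n-1}(BA)$ as operators, with common domain $D(BA)$. Setting $S:=BA$, the hypothesis thus says that $B^{n-1}S$ is closed, while $D(B^{n-1}S)=D(S)$ (again because $B^{n-1}$ is everywhere defined). Since $S$ is assumed closable, I may form its closure $\overline{S}$; as $S\subseteq\overline{S}$ and $D(S)\subseteq D(\overline{S})$ hold automatically, it suffices to prove the reverse domain inclusion $D(\overline{S})\subseteq D(S)$, for this forces $S=\overline{S}$, i.e.\ $BA$ is closed.

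To obtain that inclusion I would fix $x\in D(\overline{S})$ and choose a sequence $x_{k}\in D(S)$ with $x_{k}\to x$ and $Sx_{k}\to\overline{S}x$. Applying the continuous operator $B^{n-1}$ gives $B^{n-1}Sx_{k}\to B^{n-1}\overline{S}x$, so $(x_{k},B^{n-1}Sx_{k})$ is a sequence in the graph of the closed operator $B^{n-1}S$ that converges in $H\oplus H$. Closedness of $B^{n-1}S$ then yields $x\in D(B^{n-1}S)=D(S)$, which is exactly what was needed; one then recovers $Sx=\overline{S}x$ from $S\subseteq\overline{S}$, though this is not required for the conclusion.

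I do not anticipate a genuine obstacle here; the one point deserving care is the precise role of the closability hypothesis on $BA$. It is what makes the closure $\overline{BA}$ available in the first place, and — in the sequential reading of the argument — it is exactly what upgrades a conclusion of the shape $B^{n-1}(BAx-y)=0$ to $BAx=y$, since $B^{n-1}$ need not be injective. This is the familiar reason why the product of a bounded operator with a closed operator may fail to be closed, and it is why the hypothesis here must be that $B^{n}A$ be closed (and not merely closable) together with $BA$ closable.
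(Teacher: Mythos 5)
Your argument is correct and is essentially the paper's own proof: both push a graph-convergent sequence for $BA$ through the bounded operator $B^{n-1}$, use the closedness of $B^nA=B^{n-1}(BA)$ to conclude the limit lies in $D(A)=D(BA)$, and invoke the closability of $BA$ to identify the limit of $BAx_k$. The only difference is presentational — you phrase it as $D(\overline{BA})\subseteq D(BA)$ forcing $BA=\overline{BA}$, while the paper verifies closedness directly on an arbitrary sequence — which changes nothing of substance.
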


\begin{proof}Let $(x_p)$ be in $D(B^nA)$ and such that $x_p\to x$ and $BAx_p\to
y$. Since $B^{n-1}\in B(H)$, $B^nAx_p\to B^{n-1}y$. Since $B^nA$ is
closed, we obtain $x\in D(B^nA)=D(A)$. Since $BA\subset
\overline{BA}$ and $x\in D(BA)$, we have
\[BAx=\overline{BA}x=\lim_{p\to \infty}BAx_p=y\]
by the definition of the closure of an operator. We have therefore
shown that $BA$ is closed, as wished.
\end{proof}

Now, we show Proposition \ref{23/01/2022}.

\begin{proof} Since $B^n$ is positive, Theorem \ref{BDM-THM-BA
self-adjoint} gives both the self-adjointness of $A$ and $B^nA$.
Moreover, $B^nA=AB^n$. Using Lemma \ref{PAMS 03 LEMMA squr rt BA
subset AB } or else, we get $BA\subset AB$ (only the inclusion
suffices to finish the proof). The equation $B^nA=AB^n$ contains the
closedness of $B^nA$ which, by a glance at Lemma \ref{BnA closed
implies BA closed LEMMA}, yields $BA=AB$ by consulting Lemma
\ref{product}.
\end{proof}

\end{document}